\theoremstyle{plain}
\newtheorem{theorem}{Theorem}[section]
\newtheorem{lemma}[theorem]{Lemma}
\renewenvironment{proof}[1][Proof]{\textbf{#1.} }{\ \rule{0.5em}{0.5em} \par }
\theoremstyle{remark}
\theoremstyle{definition}
\newtheorem{definition}[theorem]{Definition}
\def\RR{\mathbb{R}}
\def\EE{\mathbb{E}}
\def\cF{{\mathcal F}}
\def\si{{\sigma}}
\def\Om{{\Omega}}
\def\al{{\alpha}}
\def\si{{\sigma}}
\def \eref#1{\hbox{(\ref{#1})}}
\def\EE{\mathbb{ E}}
\def\si{{\sigma}}
\def\al{{\alpha}}
\newtheorem{Rem}[theorem]{Remark}
\renewcommand{\SS}{\mathbb S}
\let\Section=\section
\def\section{\setcounter{equation}{0}\Section}
\title[Smoothness of density for SDEs with Markovian switching]{Smoothness of density for stochastic differential equations with Markovian switching}
\author[Y. Hu]{Yaozhong {\sc Hu}}\thanks{Y.  Hu is
partially supported by a grant from the Simons Foundation
\#209206.}
\address{Department of Mathematical  and Statistical Sciences\\
University of Alberta\\
 Edmonton, Alberta,Canada T6G 2G1}
\email{yaozhong@ualberta.ca}
\author[D. Nualart]{David {\sc Nualart}}\thanks{D. Nualart is
supported by the NSF grant DMS1512891. }
\address{Department of Mathematics \\
The University of Kansas \\
Lawrence, Kansas, 66045}
\email{nualart@math.ku.edu}
\author[X. Sun]{Xiaobin {\sc Sun}}\thanks{X. Sun is
supported by Natural Science Foundation of China (11601196), Natural Science Foundation of the Higher Education Institutions of Jiangsu Province (16KJB110006) and Scientific Research Staring Foundation of Jiangsu Normal University (15XLR010). }
\address{School of Mathematics and Statistics\\  Jiangsu Normal University\\  Xuzhou 221116, China}
\email{xbsun@jsnu.edu.cn}
\author[Y. Xie]{Yingchao {\sc Xie} }\thanks{Y. Xie is supported by Natural Science Foundation of China (11771187) and the
Project Funded by the Priority Academic Program Development of Jiangsu Higher Education Institutions. }
\address{School of Mathematics and Statistics\\  Jiangsu Normal University\\  Xuzhou 221116, China}
\email{ycxie@jsnu.edu.cn }
 \keywords{Malliavin calculus, Markovian switching, smoothness of density, Bismut formula,  strong Feller
property. }
\begin{document}

\begin{abstract}  This paper is concerned with a class of stochastic differential equations with Markovian switching.
The Malliavin calculus is used to study the smoothness of the density  of the solution under a H\"{o}rmander type condition. Furthermore, we obtain a
Bismut type formula which is used to establish the strong Feller property.
\end{abstract}

 \maketitle

\section{Introduction}

This paper considers  the  following stochastic differential equation with Markovian switching on $\RR^n$:
\begin{equation}
dX_t=b(X_t, \alpha_t)dt+\sigma(X_t, \alpha_t)dW_t, \ \ (X_0\,,
\al_0)=(x,\al)\in \RR^n \times \SS \,, \label{e.1.x}
\end{equation}
where $\mathbb{S}=\{1, 2, \ldots,
m_0\}$ and  $\left\{\alpha_t\,, t\ge 0\right\}$ is a right-continuous  $\SS$-valued Markov chain  described by
\begin{equation}
\mathbb{P}\{\alpha_{t+\Delta}=j|\alpha_{t}=i\}=\left\{\begin{array}{l}
\displaystyle q_{ij}\Delta+o(\Delta),~~~~~~i\neq j\\
1+q_{ii}\Delta+o(\Delta),~~~i=j,\end{array}\right.
\label{1.alpha}
\end{equation}
and $Q= (q_{ij})_{1 \le i,j \le m_0}$ is a $Q$-matrix.

Stochastic differential equations (SDEs) of this type have been extensively studied (see, for instance \cite{BBG,M,YZ,YM}). Existence and uniqueness of a solution,  existence of an invariant measure,  stability and other important properties have been analyzed.  In this paper we first study the smoothness of the density of the solution, then establish a Bismut  type  formula. Finally, as an application,  we prove the strong Feller property.

Our first purpose is to show the differentiability in the sense of  Malliavin calculus  of the solution  $X_t$  to equation  (\ref{e.1.x}). The difficulty here is the appearance of the Markovian
switching term $\alpha_t$, which is a jump process.  We will  perform perturbations of the underlying Brownian motion, keeping the Markovian switching process $\alpha_t$ unperturbed. The technique for this analysis is inspired in the partial Malliavin calculus, which can be regarded
as a stochastic calculus of variation for random variables with values in  a Hilbert space.

After developing  the Malliavin calculus for the solution $X_t$, we investigate the smoothness of the
density of the law of the  random vector $X_t$, for a fixed $t>0$,   with respect to the Lebesgue measure.
For this   we need to show that the determinant of the Malliavin
matrix of $X_t$ has negative moments of all orders. In the classical diffusion case, this is guaranteed by a  H\"{o}rmander  type nondegeneracy   condition.
To follow this classical approach we immediately encounter a difficulty: the process  $X_t$  depends on the discrete process
$\alpha_t$, and the application of It\^{o}'s formula yields some jump terms. To overcome this difficulty we shall use  the following strategy  inspired by \cite{FLT}.  First we notice that the jump times form a subset  of the jump times of some   Poisson process $N_t$, independent of the driving Brownian motion $W_t$. Then conditioning on $N_t=k$, there exists a random interval $[T_1, T_2]$   such that $T_2-T_1\geq\frac{t}{k+1}$. On this random time interval, the It\^{o}'s formula  for $(X_t, \alpha_t)$ will not produce a jump term, and  we can apply the  classical procedure. This requires a version of the classical Norris lemma on time intervals.

We also use Malliavin calculus to establish a Bismut  type  formula for the transition semigroup of the Markov process $(X_t, \alpha_t)$. As an application of Bismut  type  formula, we derive the strong Feller property of the process $(X_t, \alpha_t)$.

The paper is organized as follows. In the next section, we introduce  some notation
and  assumptions that we use throughout the paper. We develop  the Malliavin calculus for  SDEs with Markovian switching  in  Section 3.
In  Section 4, we show  that the determinant of the Malliavin covariance matrix
has all negative finite moments under a suitable uniform H\"{o}rmander's condition. Finally, we establish the Bismut  type  formula and use it to prove the strong Feller property in Section 5.

\section{Preliminaries}\label{sec.prelim}
Let  $(\Omega_1, \cF_1, \mathbb{P}_1)$ be the $d$-dimensional
canonical Wiener space with the natural filtration
$\cF_1= \left\{\cF_1(t),   t\ge 0\right \} $.
That is,  $\Omega_1$  is  the set of all continuous maps $\omega_1: \mathbb{R}_+\rightarrow \mathbb{R}^d$ such that $\omega_1(0)=0$  and    $\cF_1$
is the completion of the Borel $\sigma$ field of $\Omega_1$ with
respect to $\mathbb{P}_1$, where  $\mathbb{P}_1$ is the canonical
Wiener measure. Then, $W=\{ W_t(\omega_1):=\omega_1(t), t \ge 0\}$ is a
$d$-dimensional Brownian motion.

Let $\mathbb{S}=\{1, 2, \ldots,
m_0\}$,  where $m_0$ is a given positive integer which will be fixed
throughout the paper.
Let
$Q=\left(q_{ij}\right)_{1\le i,j\le m_0}$ be a $Q$-matrix satisfying the following assumption:
\begin{itemize}
\item[(i)] $q_{ij}\geq0$ for $i\neq j$,
\item[(ii)] $q_{ii}=-\sum_{j\neq i}q_{ij}$ for $i\in \mathbb{S}$,
\item[(iii)] $\sup_{i, j\in\mathbb{S}}|q_{ij}|:= K<\infty$.
\end{itemize}

Let
$(\Omega_2,\cF_2,\mathbb{P}_2)$ be another complete probability
space with a filtration $\cF_2=\{\cF_2(t) ,t\ge 0\}$ satisfying the usual conditions,
on which there exists a right-continuous $\SS$-valued Markov chain $\alpha=\{\alpha_t, t\ge 0\}$
satisfying  (\ref{1.alpha}).

Denote  the product probability space  by $(\Omega,
\cF, \mathbb{P}):=(\Omega_1\times\Omega_2, \cF_1\times\cF_2,
\mathbb{P}_1\times\mathbb{P}_2)$ with the product filtration $\cF =\{ \cF_t, t\ge 0\}$, where
$\cF_t=\cF_1(t)\times \cF_2(t)$.
We extend  $W_t$ and $\alpha_t$ to random variables defined on
 $\Omega$ by letting
$W_t(\omega)=\omega_{1}(t)$ and $\alpha_t(\omega)=\alpha_t(
\omega_2)$, respectively, if $\omega=(\omega_1,\omega_2)$.  Notice that on the probability space $(\Om,\cF, \mathbb{P})$, the processes $W$ and
$\alpha$ are independent.

\vspace{0.3cm}
It is well known (see \cite{BBG}) that the process
$\alpha$ can   be  described in the
following manner. Introduce the function $g: \mathbb{S}\times [0,
m_0(m_0-1)K]\rightarrow\mathbb{R}$ defined by
\[
g(i, z)=\sum_{j\in \SS\backslash i }(j-i)1_{z\in\triangle_{ij}},\quad   i\in \SS\,,
\]
where $\triangle_{ij}$ are the consecutive (with respect to the
lexicographic ordering on $\mathbb{S}\times\mathbb{S}$) left-closed,
right-open intervals of $\mathbb{R}_{+}$, each having length
$q_{ij}$, with $\Delta_{12}= [0,q_{12})$. Then, equation (\ref{1.alpha}) can also be written as
\begin{equation} \label{2.3}
d\alpha(t)=\int_{[0, m_0(m_0-1)K]}g(\alpha_{t-}, z)N(dt, dz),
\end{equation}
where $N(dt, dz)$ is a Poisson random measure defined on $\Omega\times\mathcal{B}(\mathbb{\RR_{+}})\times\mathcal{B}(\mathbb{\RR_{+}})$, whose intensity measure is Lebesgue measure, and $N(dt, dz)$ is independent of $W$.

\vspace{0.3cm}
For $k\in\mathbb{N}$ we denote by $C^{k}(\mathbb{R}^{n}\times\mathbb{S};\mathbb{R}^n)$
the family of all $\mathbb{R}^n$-valued functions $f(x, \alpha)$ on $\mathbb{R}^{n}\times\mathbb{S}$ which
are $k$-times continuously differentiable in $x$ for any $\alpha \in \mathbb{S}$.  The  $k$-th derivative tensor of $f$ with respect to $x$ is denoted by $\nabla^k f(x, \alpha)$.

We denote by $|\cdot |$ the Euclidean norm and consider the metric  $\Lambda$ on $\mathbb{R}^n\times\mathbb{S}$\  given by
 $\Lambda((x, i), (y, j))=|x-y|+d(i, j)$,  for $x, y\in\mathbb{R}^n, i, j\in\mathbb{S}$,
where $d(i, j)=0$ if $i=j$ and $d(i, j)=1$ if $i\neq j$. Let ${\mathcal B}_b(\mathbb{R}^n\times\mathbb{S})$
be the family of all bounded Borel measurable functions on $\mathbb{R}^n\times\mathbb{S}$.

Suppose that $b  :\RR^n\times \SS \rightarrow \RR^n$ and $ \sigma
:\RR^n\times \SS \rightarrow \RR^{nd }$  are functions satisfying the  following assumptions:

\smallskip
\noindent
\textbf{(${\bf A}_1$)} There is a positive constant $C_1$ such that
\[
|b(x, i)-b(y, i)|\vee|\sigma(x, i)-\sigma(y, i)|\leq C_1|x-y|
\quad\hbox{for any $x,y\in\mathbb{R}^{n}, i\in\mathbb{S}$}\,.
\]

\smallskip
\noindent
\textbf{(${\bf H}_k$)} Fix an integer $k\ge 1$. For each $i=1, \dots, d$, the functions $b$ and $\sigma_i$ belong to
$C^{k}(\mathbb{R}^{n}\times\mathbb{S};\mathbb{R}^n)$, and have bounded partial derivatives up to the order $k$.

\medskip
\noindent \textbf{(${\bf H}_\infty$)}   For each $i=1, \dots, d$ and for any $k\ge 1$, the functions $b$ and $\sigma_i$ belong to
$C^{k}(\mathbb{R}^{n}\times\mathbb{S};\mathbb{R}^n)$, and have bounded partial derivatives of all orders.

\medskip
It is clear that  \textbf{(${\bf H}_k$)} implies \textbf{(${\bf A}_1$)} for any $k\ge 1$.

\medskip
Under  condition \textbf{(${\bf A}_1$)}, equation (\ref{e.1.x}) possesses a unique strong solution $X=\{ X_t, t\ge 0\}$.
Moreover, for any $p\ge 2$ and $T>0$, $\mathbb{E}(\sup_{0\leq t\leq T}|X_t|^p)\leq C_2$, where $C_2$ is a positive constant depending only on
$p, T$ and $x$.  On the  other hand, $(X,\alpha)=\{(X_t,\alpha_t), t\ge 0\}$  is an homogeneous Markov process and the associated Markov semigroup $P_t$ satisfies
$$
P_tf(x, \alpha)=\mathbb{E}f(X_t(x,\alpha), \alpha_t(x, \alpha)), \quad  t\ge 0, f\in {\mathcal B}_b(\mathbb{R}^n\times\mathbb{S}).
$$
We refer, for instance,  to \cite{M} and \cite{YM} for  a detailed presentation and proofs of the above results.

Along the paper $C$ will denote a generic constant which may vary from line to line and it might depend on $T$, the exponent $p\ge 2$, the initial condition $x$ and a fixed element $h\in H$ (the precise definition of $H$ is in the next section).

\section{The Malliavin calculus}

In this section we analyze the regularity, in the sense of Malliavin calculus, of the     solution $X_t$
to  equation (\ref{e.1.x}). The procedure is to perform perturbations of the underlying Brownian motion, keeping the Markovian switching process $\alpha_t$ invariant. The technique for this analysis is inspired in the partial Malliavin calculus which can be regarded as a stochastic calculus of variation for random variables with values on  the Hilbert space $L^2(\Omega_2)$.

We follow the notation introduced in the Preliminaries.  Denote by $H$ the Hilbert
space $H=L^{2}(\mathbb{R}_+; \mathbb{R}^{d})$, equipped with the inner
product $\langle h_1, h_2\rangle_H=\int^{\infty}_{0} \langle
h_1(s),h_2(s)\rangle_{\RR^d} ds$.

For a  Hilbert space $U$ and a real number $p\ge 1$, we denote by
$L^p(\Omega_1; U)$ the space of $U$-valued random variables $\xi$
such that $\mathbb{E}_1\|\xi\|^{p}_{U}<\infty$, where $\mathbb{E}_1$
is the mathematical expectation on the probability space
$(\Omega_1, \cF_1, \mathbb{P}_1)$. We also set
$L^{\infty-}(\Omega_1; U):=\cap_{p<\infty}L^p(\Omega_1; U)$.

We introduce the derivative operator for a random variable $F$ in the space
$L^{\infty-}(\Omega_1; U)$ following the approach of Malliavin in  \cite{Ma}.
We say that $F$ belongs to $\mathbb{D}^{1,\infty}(U)$ if
there exists $DF\in L^{\infty-}(\Omega_{1}; H\otimes U)$ such that for any $h\in H$,
$$
\lim_{\varepsilon \rightarrow 0}\mathbb{E}_1\left\|\frac{F(\omega_1+\varepsilon \int_0^{\cdot} h_s ds)
-F(\omega_1)}{\varepsilon }-\langle DF, h\rangle_H\right\|^{p}_{U}=0
$$
holds for every $p\geq 1$. In this case, we define the Malliavin derivative of $F$
in the direction $h$ by $D^{h}F :=\langle DF, h\rangle_{H}.$
Then, for any $p\ge 1$ we define  the Sobolev space $\mathbb{D}^{1,p}(U)$ as the
completion of $\mathbb{D}^{1,\infty}(U)$ under the following norm
$$
\|F\|_{1, p, U}=\left[\mathbb{E}_1\|F\|^p_{U}\right]^{1/p}
+\left[\mathbb{E}_1\|DF\|^p_{H\otimes U}\right]^{1/p}.
$$
By induction we define the $k$-th derivative by $D^{k}F=D(D^{k-1}F),$
which is a random element with values in  $H^{\otimes k}\otimes U$.
For any integer  $k\ge 1$, the Sobolev space  $\mathbb{D}^{k,p}(U)$ is the
completion of  $\mathbb{D}^{k,\infty}(U)$ under the norm
$$
\|F\|_{k, p, U}=\|F\|_{k-1, p, U}+\|D^{k}F\|_{1, p, H^{\otimes k}\otimes U}.
$$
It turns out that  $D$ is a closed operator from $L^{p}(\Omega_{1};U)$ to $L^{p}(\Omega_{1}; H\otimes U)$.
Its adjoint $\delta$ is called the divergence operator, and is continuous form $\mathbb{D}^{1,p}( H\otimes U)$
to $L^{p}(\Omega_{1};U)$  for any $p>1$. The duality relationship reads
\[
\mathbb{E}_1(\langle DF, u\rangle_{H\otimes U})=\mathbb{E}_1(\langle F,\delta(u)\rangle_{U}),
\]
for any $F\in  \mathbb{D}^{1,p}( U)$ and $u\in \mathbb{D}^{1,q}(  H\otimes U)$, with $\frac 1p +\frac 1q=1$.

A square integrable
random variable $F\in L^2(\Omega)$ can be identified with an element
of $L^2(\Omega_1; V)$, where $V=L^2(\Omega_2)$.

The following is the main result of this section.
\begin{theorem}\label{main}   Suppose that  Hypothesis \textbf{(${\bf H}_2$)} holds.
Then for any $t\ge 0$ and any $h\in H$,
$X_t\in \mathbb{D}^{1,\infty}(\mathbb{R}^n\otimes V)$ and $D^hX_t$ satisfies
\begin{equation}\left\{\begin{array}{l}
\displaystyle dD^hX_t=\nabla b(X_t, \alpha_t)D^hX_tdt+\sum^{d}_{i=1}\nabla\sigma_{i}(X_t, \alpha_t)D^hX_tdW^{i}_t+\sigma(X_t, \alpha_t)h_tdt,\\
D^hX_0=0.\end{array}\right. \label{3.1}
\end{equation}
\end{theorem}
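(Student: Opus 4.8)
The plan is to reduce the problem, by conditioning on the Poisson measure $N$ (that is, on $\omega_2$), to the classical theory of Malliavin differentiability of It\^o diffusions, and then to patch the resulting variational equations across the switching times. First I would fix $\omega_2\in\Omega_2$. Since $N$ has finite intensity on the bounded strip $[0,T]\times[0,m_0(m_0-1)K]$, there are almost surely only finitely many atoms, producing finitely many candidate jump times $0<\tau_1<\cdots<\tau_N<T$. On each open interval $(\tau_k,\tau_{k+1})$ the switching variable $\alpha_t$ is constant, so that $X$ solves there the ordinary It\^o equation $dX_t=b(X_t,\alpha_{\tau_k})\,dt+\sigma(X_t,\alpha_{\tau_k})\,dW_t$, whose coefficients are, by Hypothesis \textbf{(${\bf H}_2$)}, $C^2$ in $x$ with bounded first and second derivatives. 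Standard results in Malliavin calculus then give that the restriction of $X$ to such an interval belongs to $\mathbb{D}^{1,\infty}$, with $D^hX$ satisfying the linearized equation appearing in \eqref{3.1} but with the frozen coefficient $\alpha_{\tau_k}$.

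Second, I would build the global derivative by induction over the intervals. Because only $\alpha$ jumps at $\tau_k$ while $X$ itself stays continuous, the terminal value $X_{\tau_k-}$, together with its derivative $D^hX_{\tau_k-}$, serves as the initial condition on the next interval; concatenating the interval-wise variational equations yields a single linear equation on $[0,T]$ whose coefficients are evaluated at the running pair $(X_t,\alpha_t)$, which is precisely \eqref{3.1}. The bound $X_t\in\mathbb{D}^{1,\infty}(\mathbb{R}^n\otimes V)$ would then follow from a Gronwall estimate applied to \eqref{3.1}: since $\nabla b$ and $\nabla\sigma_i$ are bounded and the forcing term $\sigma(X_t,\alpha_t)h_t$ is square integrable in $t$, one gets $\mathbb{E}_1\big(\sup_{0\le t\le T}\|D^hX_t\|^p\big)<\infty$ for every $p\ge1$, with a constant depending only on the sup-norms of $\nabla b,\nabla\sigma_i$ and on $\|h\|_H$, hence independent of $\omega_2$; integrating over $\omega_2$ yields the norm in $\mathbb{R}^n\otimes V$.

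The genuine obstacle, and the step I expect to absorb most of the work, is to justify that the switching mechanism itself contributes no term to \eqref{3.1}. A priori the jump destination at $\tau_k$ is decided by the event $\{z_k\in\Delta_{ij}(X_{\tau_k-})\}$, which depends on $X_{\tau_k-}$ and hence on $\omega_1$, so perturbing $W$ could move a mark $z_k$ across an interval boundary and alter the entire switching trajectory. The resolution I would pursue is that this does not survive in the limit: for almost every $\omega$ the mark $z_k$ lies in the interior of the relevant region, and since $X^\varepsilon_{\tau_k-}\to X_{\tau_k-}$ as $\varepsilon\to0$, there is $\varepsilon_0(\omega)>0$ below which the perturbed and unperturbed processes share the same switching path; on that set the difference quotient converges pointwise to the solution of \eqref{3.1}. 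This is exactly the place where freezing $N$ (hence the marks and the jump times) under the perturbation is indispensable, and it reflects the partial Malliavin calculus philosophy of the paper.

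Promoting this almost sure convergence to the $L^p(\Omega_1;\mathbb{R}^n\otimes V)$ convergence required by the definition of $\mathbb{D}^{1,\infty}$ is the delicate heart of the argument, because on the exceptional set $\{\varepsilon_0(\omega)<\varepsilon\}$ the difference quotient is of order $\varepsilon^{-1}$. I would try to control this by estimating, conditionally on $\omega_2$, the probability that a fixed mark falls in the $O(\varepsilon)$-band swept out by the moving boundaries of the intervals $\Delta_{ij}(X^\cdot_{\tau_k-})$, and by combining this with uniform higher-moment bounds on $X$, $X^\varepsilon$ and on the variational processes so as to render the exceptional contribution negligible after passing to the limit. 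Making this last estimate work with the exact power of $\varepsilon$ needed is, in my view, the crux on which the whole theorem rests.
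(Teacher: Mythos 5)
Your strategy is genuinely different from the paper's, and it stalls at exactly the step you flag. The paper's proof of Theorem~\ref{main} never conditions on $\omega_2$ or on the jump times (that conditioning idea appears only later, in Section~4, for the H\"ormander argument). Instead, it introduces the perturbed pair $(X^{\varepsilon h}_t,\alpha^{\varepsilon h}_t)$ solving the shifted system, collects every term involving the discrepancy between $\alpha^{\varepsilon h}$ and $\alpha$ into a remainder $\phi^{\varepsilon}_t$, and proves $\mathbb{E}\bigl[\sup_{t\le T}|\phi^{\varepsilon}_t|^{p}\bigr]\to 0$ (Lemma~\ref{l.3.4}) by partitioning $[0,T]$ into intervals of mesh $\eta$, using the time regularity of $X^{\varepsilon h}$ (Lemma~\ref{l.3.3}) and the basic coupling of the two switching chains on each partition interval. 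This produces a bound of the form $C\eta/\varepsilon^{p}$ in which the mesh $\eta$ is a \emph{free} parameter, with a constant uniform in $\varepsilon$; choosing $\eta<\varepsilon^{q}$ with $q>p$ closes the argument, after which Lemma~\ref{l.3.5} and a mean-value/Gronwall argument identify the limit as the solution of \eref{3.1}. Note also that your pathwise mechanism ("the mark lies in the interior, so for small $\varepsilon$ the switching path is unchanged") requires the endpoints of $\Delta_{ij}(X^{\varepsilon h}_{\tau_k-})$ to converge to those of $\Delta_{ij}(X_{\tau_k-})$, i.e.\ continuity (in fact, for your band estimate, Lipschitz continuity) of $q_{ij}$ in $x$; the paper deliberately assumes only that $q_{ij}$ is bounded and measurable, so your route silently imports a hypothesis the theorem does not have.

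The decisive gap is the $L^{p}$ upgrade, and it is not merely unfinished: the estimate you propose cannot deliver the required convergence. Even granting Lipschitz $q_{ij}$, the exceptional set $A_{\varepsilon}$ (some mark falls in the $O(\varepsilon)$-band swept by a moving boundary) has probability of order $\varepsilon$, and on $A_{\varepsilon}$ the two switching paths separate, so $|X^{\varepsilon h}_t-X_t|$ is of order one and the difference quotient of order $\varepsilon^{-1}$. The contribution of $A_{\varepsilon}$ to the $p$-th moment of the difference-quotient error is therefore of order $\varepsilon^{-p}\,\mathbb{P}(A_{\varepsilon})\approx\varepsilon^{1-p}$, which diverges for every $p\ge 2$; applying H\"older with higher moments only worsens this to $\varepsilon^{1/2-p}$. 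Moreover, within your decomposition these orders are in general matched from below (whenever $b$ or $\sigma$ genuinely depend on the switching state), so no refinement of band-probability and moment estimates can make this term $o(1)$: the smallness that beats $\varepsilon^{-p}$ must come from a quantity that is not tied to $\varepsilon$. That is precisely what the paper's mechanism supplies: in Lemma~\ref{l.3.4} the switching-disagreement error is bounded, via the coupling over each partition interval, by $C\eta$ uniformly in $\varepsilon$, and $\eta$ is then sent to zero polynomially faster than $\varepsilon$. Your proposal, as written, has no analogue of this step, and without one the theorem's $L^{p}$ statement is out of reach along your route.
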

To prove the theorem,  let $X^{\varepsilon  h}_t$ be the solution of equation (\ref{e.1.x}) with $W_t$ replaced by $W_t+\varepsilon \int^t_0 h_s ds$,  where $ \varepsilon \in (0,1)$, that is,
\begin{equation}\left\{\begin{array}{l}
\displaystyle dX^{\varepsilon  h}_t=b(X^{\varepsilon  h}_t, \alpha_t)dt+
 \sigma (X^{\varepsilon  h}_t, \alpha_t)dW _t+\varepsilon \sigma(X^{\varepsilon  h}_t, \alpha_t)h_tdt,\\
(X^{\varepsilon  h}_0, \alpha_0)=(x, \alpha)\in\mathbb{R}^n\times\mathbb{S},\end{array}\right.
\label{e.3.5a}
\end{equation}
where $\alpha_t$ is defined in (\ref{1.alpha}). Then, we can write
\begin{eqnarray*}
\frac{X^{\varepsilon  h}_{t}-X_t}{\varepsilon }
=\!\!\!\!\!\!\!\!&&\frac{1}{\varepsilon }\int^t_0[b(X^{\varepsilon
h}_s, \alpha_s)-b(X_s, \alpha_s)]ds
+\frac{1}{\varepsilon }\int^t_0[\sigma(X^{\varepsilon  h}_s, \alpha_s)-\sigma(X_s, \alpha_s)]dW_s\nonumber\\
&&+\int^t_0\sigma(X^{\varepsilon  h}_s, \alpha_s)h_sds\nonumber.
\end{eqnarray*}

In order to prove Theorem \ref{main}, we first give some preliminary lemmas.
\begin{lemma} \label{l.3.2} Suppose that   Hypothesis \textbf{(${\bf A}_1$)}  holds.
Then for any $h\in H$, $T>0$ and $p\geq 2$, we have
$$
\mathbb{E}\left[\sup_{0\leq t\leq T}|X^{\varepsilon  h}_t|^{p}\right]\leq C.
$$
\end{lemma}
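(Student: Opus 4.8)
The plan is to establish this uniform moment bound by the standard route: pass to the integral form of the perturbed equation \eqref{e.3.5a}, apply the Burkholder--Davis--Gundy (BDG) inequality to the stochastic integral, control the remaining terms by linear growth of the coefficients, and close the estimate with Gronwall's lemma. The one point requiring care beyond the classical diffusion argument is to verify that the resulting constant $C$ is independent of $\varepsilon$; this is exactly where the restriction $\varepsilon\in(0,1)$ and the fixed element $h\in H$ enter. As a first step I would record that Hypothesis \textbf{(H1)} forces linear growth of the coefficients: the Lipschitz bounds give $|b(x,i)|\le|b(0,i)|+C_1|x|$ and $|\sigma(x,i)|\le|\sigma(0,i)|+C_1|x|$, and since $\mathbb{S}=\{1,\dots,m_0\}$ is finite, $\sup_i|b(0,i)|$ and $\sup_i|\sigma(0,i)|$ are finite; hence $|b(x,i)|\vee|\sigma(x,i)|\le C(1+|x|)$ for all $(x,i)\in\mathbb{R}^n\times\mathbb{S}$.

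From the integral form of \eqref{e.3.5a} I would then write, for $0\le t\le T$,
\[
|X^{\varepsilon h}_t|^p \le C\Big(|x|^p + \Big|\int_0^t b(X^{\varepsilon h}_s,\alpha^{\varepsilon h}_s)\,ds\Big|^p + \Big|\int_0^t \sigma(X^{\varepsilon h}_s,\alpha^{\varepsilon h}_s)\,dW_s\Big|^p + \varepsilon^p\Big|\int_0^t \sigma(X^{\varepsilon h}_s,\alpha^{\varepsilon h}_s)h_s\,ds\Big|^p\Big),
\]
take $\sup_{0\le t\le T}$ and then expectations. The drift term is treated by Hölder's inequality in time together with linear growth, producing a bound of the form $C\int_0^t \mathbb{E}[\sup_{r\le s}|X^{\varepsilon h}_r|^p]\,ds$ plus a constant; the martingale term gives the same type of bound after BDG, Hölder and linear growth.

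The genuinely new term is the perturbation $\varepsilon\int_0^t \sigma(X^{\varepsilon h}_s,\alpha^{\varepsilon h}_s)h_s\,ds$. Using $\varepsilon<1$, the Cauchy--Schwarz inequality in time against $h\in H$, and linear growth, I would estimate
\[
\varepsilon^p\Big|\int_0^t \sigma(X^{\varepsilon h}_s,\alpha^{\varepsilon h}_s)h_s\,ds\Big|^p \le \|h\|_H^p\Big(\int_0^t |\sigma(X^{\varepsilon h}_s,\alpha^{\varepsilon h}_s)|^2\,ds\Big)^{p/2} \le C\Big(1+\int_0^t |X^{\varepsilon h}_s|^p\,ds\Big),
\]
where the last inequality folds in Jensen's inequality and a constant depending only on $p,T,x$ and $\|h\|_H$, crucially not on $\varepsilon$. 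Collecting all three contributions yields $\mathbb{E}[\sup_{0\le s\le t}|X^{\varepsilon h}_s|^p]\le C + C\int_0^t \mathbb{E}[\sup_{r\le s}|X^{\varepsilon h}_r|^p]\,ds$. To make Gronwall's lemma rigorous I would first localize with the stopping times $\tau_N=\inf\{t:|X^{\varepsilon h}_t|\ge N\}$ so that the quantity being estimated is a priori finite, apply Gronwall on $[0,t\wedge\tau_N]$, and let $N\to\infty$ by Fatou's lemma to obtain the claimed bound uniformly in $\varepsilon\in(0,1)$.

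The main obstacle here is not conceptual but one of bookkeeping: the entire argument is a routine a priori Gronwall estimate, and the only substantive issue is to confirm that none of the constants arising from BDG, Hölder, and the linear-growth step secretly depend on $\varepsilon$. In particular, the perturbation term must be dominated using only $\varepsilon<1$ and the fixed norm $\|h\|_H$, so that the final constant $C$ depends solely on $p$, $T$, $x$ and $h$, exactly as required for the uniform-in-$\varepsilon$ bound that the subsequent convergence of $\phi^{\varepsilon}_t$ will rely on.
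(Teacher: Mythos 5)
Your proposal is correct and follows essentially the same route as the paper: split the integral form of \eqref{e.3.5a} into the drift, martingale, and perturbation terms, bound them via H\"older's and Burkholder--Davis--Gundy's inequalities together with the linear growth implied by \textbf{(H1)}, and close with Gronwall's lemma. The extra care you take (explicit linear-growth derivation, the Cauchy--Schwarz bound against $\|h\|_H$ with $\varepsilon<1$, and the stopping-time localization before Gronwall) only makes rigorous what the paper leaves implicit; it is not a different argument.
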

\begin{proof}
From equation \eref{e.3.5a}   it is easy to see that
\begin{eqnarray*}
|X^{\varepsilon  h}_t|^{p}
\le\!\!\!\!\!\!\!\!&& C\Bigg[|x|^{p}+\left|\int^t_0 b(X^{\varepsilon  h}_s, \alpha_s)ds\right|^{p}
+\left|\int^t_0\sigma(X^{\varepsilon  h}_s, \alpha_s)dW_s\right|^{p}\\
&&+\varepsilon^p\left|\int^t_0\sigma(X^{\varepsilon  h}_s, \alpha_s)h_sds\right|^{p}\Bigg]:=C \Bigg[ |x|^p+I_1(t)+I_2(t)+I_3(t)\Bigg] \,.
\end{eqnarray*}
By H\"{o}lder's  and  Burkholder-Davis-Gundy's inequalities, we obtain
\[
\mathbb{E}\left[\sup_{0\leq t\leq T} \left( I_1(t) + I_2(t) + I_3(t)\right)  \right]
\leq C \int^T_0(\mathbb{E}|X^{\varepsilon  h}_s|^{p}+1)ds \,.
\]
Then the desired estimate follows from Gronwall's lemma.
\end{proof}

\begin{lemma} \label{l.3.5}Suppose that  Hypothesis \textbf{(${\bf A}_1$)} holds.
Then for any $h\in H$, $T>0$  and $p\geq 2$, we have
\[
\mathbb{E}\left[\sup_{0\leq t\leq T}|X^{\varepsilon  h}_t-X_t|^{p}\right]\leq C\varepsilon ^{p}.
\]
\end{lemma}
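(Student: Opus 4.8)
The plan is to derive a Gronwall estimate for the difference $Y_t:=X^{\varepsilon h}_t-X_t$, isolating the switching discrepancy as the term $\varepsilon\phi^\varepsilon_t$ already introduced before Lemma \ref{l.3.4}. Subtracting \eref{e.x} from \eref{e.3.5a} and inserting $b(X^{\varepsilon h}_s,\alpha_s)$ and $\sigma(X^{\varepsilon h}_s,\alpha_s)$ exactly as in the decomposition preceding that lemma, one gets
\[
Y_t=\int_0^t[b(X^{\varepsilon h}_s,\alpha_s)-b(X_s,\alpha_s)]\,ds+\int_0^t[\sigma(X^{\varepsilon h}_s,\alpha_s)-\sigma(X_s,\alpha_s)]\,dW_s+\varepsilon\int_0^t\sigma(X^{\varepsilon h}_s,\alpha_s)h_s\,ds+\varepsilon\phi^\varepsilon_t.
\]

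First I would take the $p$-th moment of the running supremum and apply the Burkholder-Davis-Gundy and H\"older inequalities. Hypothesis \textbf{(H1)} bounds the first two integrands by $C|Y_s|$, so they contribute $C\int_0^t\mathbb{E}[\sup_{r\le s}|Y_r|^p]\,ds$. The third term is controlled by the linear growth of $\sigma$ implied by \textbf{(H1)} together with Lemma \ref{l.3.2}: after H\"older in time it is at most $C\varepsilon^p\|h\|_H^p(1+\mathbb{E}[\sup_{0\le s\le T}|X^{\varepsilon h}_s|^p])\le C\varepsilon^p$. Hence
\[
\mathbb{E}\Big[\sup_{r\le t}|Y_r|^p\Big]\le C\int_0^t\mathbb{E}\Big[\sup_{r\le s}|Y_r|^p\Big]\,ds+C\varepsilon^p+C\varepsilon^p\,\mathbb{E}\Big[\sup_{0\le r\le T}|\phi^\varepsilon_r|^p\Big].
\]

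The crux, and the only non-routine step, is to show that $\varepsilon^p\mathbb{E}[\sup_{r\le T}|\phi^\varepsilon_r|^p]$ is itself $O(\varepsilon^p)$, equivalently that $\mathbb{E}[\sup_{r\le T}|\phi^\varepsilon_r|^p]$ remains bounded as $\varepsilon\to0$. Lemma \ref{l.3.4} only asserts that this quantity tends to zero, so I would instead reuse the quantitative estimate obtained inside its proof, namely $\varepsilon^p\mathbb{E}[\sup_{0\le t\le T}|\phi^\varepsilon_t|^p]\le C(I_4+I_5)\le C\eta$, valid for the mesh $\eta=T/M$ of any uniform partition. Choosing the partition with $\eta=\varepsilon^p$ gives directly $\varepsilon^p\mathbb{E}[\sup_{0\le t\le T}|\phi^\varepsilon_t|^p]\le C\varepsilon^p$. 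The one point to verify here is that the constant in $I_4+I_5\le C\eta$ is independent of both $\varepsilon$ and the partition, which holds because it originates only from \textbf{(H1)}, the boundedness of $Q$, and Lemmas \ref{l.3.2}-\ref{l.3.3}.

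Substituting this into the previous display reduces it to $\mathbb{E}[\sup_{r\le t}|Y_r|^p]\le C\int_0^t\mathbb{E}[\sup_{r\le s}|Y_r|^p]\,ds+C\varepsilon^p$, and Gronwall's lemma then produces $\mathbb{E}[\sup_{0\le r\le T}|Y_r|^p]\le C\varepsilon^p$, which is the claim. Finiteness of the quantity fed into Gronwall is guaranteed by Lemma \ref{l.3.2}, so the scheme is self-contained given the earlier lemmas.
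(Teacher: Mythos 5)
Your proposal follows essentially the same route as the paper's proof: the identical decomposition of $X^{\varepsilon h}_t-X_t$ into the piece with frozen switching component (the paper's $B(t)$) plus $\varepsilon\phi^\varepsilon_t$ (the paper's $A(t)$), then H\"older, Burkholder--Davis--Gundy, the Lipschitz condition \textbf{(H1)}, Lemma \ref{l.3.2} for the $h$-drift term, and Gronwall. The only divergence is how you control $\varepsilon^p\,\mathbb{E}[\sup_{t}|\phi^\varepsilon_t|^p]$, and there your stated motivation rests on a misreading: you claim Lemma \ref{l.3.4} is insufficient because it ``only asserts that this quantity tends to zero,'' but convergence to zero as $\varepsilon\to 0$ does give $\mathbb{E}[\sup_t|\phi^\varepsilon_t|^p]\leq C$ for all sufficiently small $\varepsilon$, which is exactly what the Gronwall step needs; and for $\varepsilon$ bounded away from zero the claimed estimate $\mathbb{E}[\sup_t|X^{\varepsilon h}_t-X_t|^p]\leq C\varepsilon^p$ is trivial anyway, since both processes have bounded $p$-th moments. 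This is precisely how the paper concludes, citing Lemma \ref{l.3.4} together with Gronwall's inequality. Your workaround---re-opening the proof of Lemma \ref{l.3.4} to extract $\varepsilon^p\,\mathbb{E}[\sup_t|\phi^\varepsilon_t|^p]\leq C\eta$ with $C$ independent of $\varepsilon$ and of the mesh, then taking $\eta\asymp\varepsilon^p$---is consistent with the paper's own use of that estimate (inside Lemma \ref{l.3.4} the paper chooses $\eta<\varepsilon^q$ with $q>p$), so it is not wrong within the paper's framework; it is just an unnecessary detour, and it has the structural disadvantage of making your proof of Lemma \ref{l.3.5} depend on the internals of Lemma \ref{l.3.4}'s proof (in particular on the uniformity in $\eta$ of the constants there, including the $o(\cdot)$ terms in the coupling estimates) rather than on its statement, which is the more robust citation.
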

\begin{proof}We write
\begin{eqnarray}
X^{\varepsilon  h}_t-X_t
=\!\!\!\!\!\!\!\!&&\int^t_0 [b(X^{\varepsilon  h}_s, \alpha_s)-b(X_s, \alpha_s)]ds
+\varepsilon \int^t_0\sigma(X^{\varepsilon  h}_s, \alpha_s)h_sds \nonumber\\
&&+\int^t_0[\sigma(X^{\varepsilon  h}_s, \alpha_s)-\sigma(X_s, \alpha_s)]dW_s. \nonumber
\end{eqnarray}
Applying  H\"{o}lder's inequality, Burkholder-Davis-Gundy's inequality and Lemma \ref{l.3.2},  we have
\begin{eqnarray*}
\mathbb{E}\left[\sup_{0\leq t\leq T}|X^{\varepsilon  h}_t-X_t|^{p}\right]
\leq\!\!\!\!\!\!\!\!&&C \int^T_0\mathbb{E}|X^{\varepsilon  h}_s-X_s|^{p}ds+C \varepsilon ^{p} .
\end{eqnarray*}
Hence,  Gronwall's inequality yields
$$
\mathbb{E}\left[\sup_{0\leq t\leq T}|X^{\varepsilon  h}_t-X_t|^{p}\right]\leq C \varepsilon ^p.
$$
\end{proof}

\medskip
\noindent
\textit{Proof of Theorem \ref{main}} \quad
Let $\psi^{h}_t$ be the solution of equation \eref{3.1}. It is easy to show  that $\mathbb{E}\left[\sup_{0\leq t\leq T}|\psi^{h}_t|^{p}\right]\leq C$,
where $C$ is a constant   depending on $T, x, h$ and $ p$.  We have
\begin{eqnarray*}
 \frac{X^{\varepsilon  h}_{t}-X_t}{\varepsilon }-\psi^{h}_t
=\!\!\!\!\!\!\!\!&&\frac{1}{\varepsilon } \int^t_0 [b(X^{\varepsilon  h}_s, \alpha_s)
-b(X_s, \alpha_s)\!\!-\!\!\varepsilon \nabla b(X_s, \alpha_s)\psi^{h}_s]ds
 \nonumber\\
&&+\frac{1}{\varepsilon }\!\!\int^t_0\!\!\sum^{d}_{i=1}[\sigma_{i}(X^{\varepsilon  h}_s, \alpha_s)\!\!-\sigma_{i}(X_s, \alpha_s)\!\!
-\!\!\varepsilon \nabla\sigma_{i}(X_s, \alpha_s)\psi^{h}_s]dW^{i}_s\nonumber\\
&&+\int^t_0\!\![\sigma(X^{\varepsilon  h}_s, \alpha_s)-\sigma(X_s, \alpha_s)]h_sds\,.
\end{eqnarray*}
Using twice the mean valued theorem we have
\begin{eqnarray*}
&&\frac{X^{\varepsilon  h}_{t}-X_t}{\varepsilon }-\psi^{h}_t\\
=\!\!\!\!\!\!\!\!&&\int^t_0  \left[  \left( \int^1_0\nabla b(X_s+\nu(X^{\varepsilon  h}_s-X_s), \alpha_s)d\nu \right)
\frac{X^{\varepsilon  h}_s-X_s}{\varepsilon }-\nabla b(X_s, \alpha_s)\psi^{h}_s \right]ds\nonumber\\
&&+\!\int^t_0\!\sum^{d}_{i=1}\!\left[ \left( \!\int^1_0\!\nabla\sigma_{i}(X_s\!+\!\nu(X^{\varepsilon  h}_s\!-\!X_s), \alpha_s)d\nu \right)
\frac{X^{\varepsilon  h}_s-X_s}{\varepsilon }-\nabla\sigma_{i}(X_s, \alpha_s)\psi^{h}_s\right] dW^{i}_s\nonumber\\
&&+\int^t_0\left[\sigma(X^{\varepsilon  h}_s, \alpha_s)-\sigma(X_s, \alpha_s)\right]h_sds\nonumber\\
=\!\!\!\!\!\!\!\!&&\int^t_0 \left( \int^1_0\nabla b(X_s+\nu(X^{\varepsilon  h}_s-X_s), \alpha_s)d\nu \right)
\left(\frac{X^{\varepsilon  h}_s-X_s}{\varepsilon }-\psi^{h}_s\right)ds\nonumber\\
&&+\int^t_0\sum^{d}_{i=1}\left( \int^1_0\nabla\sigma_{i}(X_s+\nu(X^{\varepsilon  h}_s-X_s), \alpha_s)d\nu \right)\left(\frac{X^{\varepsilon  h}_s-X_s}{\varepsilon }-\psi^{h}_s\right)
 dW^{i}_s+\varphi^{\varepsilon h}_t,\nonumber
\end{eqnarray*}
where $\varphi^{\varepsilon h}_t$ defined by
\begin{eqnarray*}
\varphi^{\varepsilon h}_t=\!\!\!\!\!\!\!\!&&\int^t_0[\sigma(X^{\varepsilon  h}_s, \alpha_s)-\sigma(X_s, \alpha_s)]h_sds\\
&&
+\int^t_0\left(\int^1_0\nabla b(X_s+\nu(X^{\varepsilon  h}_s-X_s), \alpha_s)d\nu-\nabla b(X_s, \alpha_s)\right)\psi^{h}_sds\\
&&+\int^t_0\sum^{d}_{i=1}\left(\int^1_0\nabla\sigma_{i}(X_s+\nu(X^{\varepsilon  h}_s-X_s), \alpha_s)d\nu-\nabla\sigma_{i}(X_s, \alpha_s)\right)\psi^{h}_s dW^{i}_s.
\end{eqnarray*}
By   Hypothesis  \textbf{(${\bf H}_2$)}, we obtain
\begin{eqnarray*}
\mathbb{E}\left[\sup_{0\leq s\leq t}\left|\frac{X^{\varepsilon  h}_{s}-X_s}{\varepsilon }-\psi^{h}_s\right|^p\right]
\le\!\!\!\!\!\!\!\!&&C \int^t_0\mathbb{E}\left|\frac{X^{\varepsilon  h}_s-X_s}{\varepsilon }-\psi^{h}_s\right|^p ds\\
&& +C \mathbb{E}\left[\sup_{0\leq s\leq t}|X^{\varepsilon  h}_s-X_s|^{p}\right] \\
&&+C\left(\mathbb{E}\sup_{0\leq s\leq t}|X^{\varepsilon  h}_s-X_s|^{2p}\right)^{1/2}
\left(\mathbb{E}\sup_{0\leq s\leq t}|\psi^{h}_s|^{2p}\right)^{1/2}.
\end{eqnarray*}
Using   Gronwall's inequality and   Lemma \ref{l.3.5}, we
obtain
$$\lim_{\varepsilon \rightarrow 0}\mathbb{E}\left[\sup_{0\leq s\leq t}
\left|\frac{X^{\varepsilon  h}_{s}-X_s}{\varepsilon }-\psi^{h}_s\right|^p\right]=0.$$
This implies that for  any $p\geq 2$,
$$
\lim_{\varepsilon \rightarrow 0}\mathbb{E}_{1}\left[\sup_{0\leq s\leq t}
\left\|\frac{X^{\varepsilon  h}_{s}-X_s}{\varepsilon }-\psi^{h}_s\right\|_{\mathbb{R}^n\otimes V}^p\right]=0.
$$
Now, let $D_{s}X_t$ be the solution of the following equation:
\begin{eqnarray*}
D_{s}X_t=\!\!\!\!\!\!\!\!&&\sigma(X_s, \alpha_s)+\int^t_{s}\nabla b(X_r, \alpha_r)D_{s}X_r dr
+\int^t_{s}\sum^{d}_{i=1}\nabla\sigma_{i}(X_r, \alpha_r)D_{s}X_r dW^{i}_r
\end{eqnarray*}
for $s\leq t$ and $D_{s}X_t=0$ for $s>t$.
Then we can easily obtain that $D^hX_t=\psi^{h}_t$ and
$DX_t\in L^{\infty-}(\Omega_{1}, H\otimes\mathbb{R}^n\otimes V)$.
The proof is complete.  \quad $\blacksquare$

As a consequence, we can show the following version of the chain rule.

\begin{theorem} \textbf{(Chain rule)} Assume that  condition \textbf{(${\bf H}_2$)} holds.
Then for any $h\in H$, $t\ge 0$ and  $p\geq 2$, if  $f\in C^{2}_b(\mathbb{R}^n\times\mathbb{S})$, we have
$$
\lim_{\varepsilon \rightarrow 0}\mathbb{E}\left|\frac{f(X^{\varepsilon  h}_{t}, \alpha_t)
-f(X_t, \alpha_t)}{\varepsilon }-\nabla f(X_t, \alpha_t)D^{h}X_t\right|^p=0.
$$
Moreover, $f(X_t, \alpha_t)\in \mathbb{D}^{1, \infty}(V)$ and $Df(X_t, \alpha_t)=\nabla f(X_t, \alpha_t)DX_t$.
\end{theorem}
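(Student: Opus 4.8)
The plan is to separate the increment of $f$ into a contribution coming from the diffusion component and one coming from the switching component, and to show that the latter does not contribute to the derivative. Write
\[
f(X^{\varepsilon h}_t, \alpha^{\varepsilon h}_t) - f(X_t, \alpha_t)
= \underbrace{\big[f(X^{\varepsilon h}_t, \alpha^{\varepsilon h}_t) - f(X^{\varepsilon h}_t, \alpha_t)\big]}_{R^\varepsilon_t}
+ \underbrace{\big[f(X^{\varepsilon h}_t, \alpha_t) - f(X_t, \alpha_t)\big]}_{S^\varepsilon_t}.
\]
I would prove that $\varepsilon^{-1}S^\varepsilon_t \to \nabla f(X_t,\alpha_t)D^hX_t$ in $L^p(\Omega)$ and that $\varepsilon^{-1}R^\varepsilon_t \to 0$ in $L^p(\Omega)$; adding the two limits gives the claim.

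For the diffusion part $S^\varepsilon_t$ the discrete index stays equal to $\alpha_t$, so a first-order expansion in the $x$-variable is available. By the mean value theorem,
\[
\frac{S^\varepsilon_t}{\varepsilon} - \nabla f(X_t,\alpha_t)D^hX_t
= G^\varepsilon_t\Big(\frac{X^{\varepsilon h}_t - X_t}{\varepsilon} - D^hX_t\Big)
+ \big(G^\varepsilon_t - \nabla f(X_t,\alpha_t)\big)D^hX_t,
\]
where $G^\varepsilon_t = \int_0^1 \nabla f\big(X_t + \nu(X^{\varepsilon h}_t - X_t), \alpha_t\big)\,d\nu$ is uniformly bounded because $f\in C^2_b$. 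The first term tends to $0$ in $L^p$ by Theorem \ref{main}. For the second, Lemma \ref{l.3.5} gives $X^{\varepsilon h}_t\to X_t$ in $L^p$, hence in probability, so that $G^\varepsilon_t\to\nabla f(X_t,\alpha_t)$ in probability by continuity of $\nabla f$; since the integrand is dominated by $(2\|\nabla f\|_\infty)^p|D^hX_t|^p\in L^1$ (using $D^hX_t\in L^{\infty-}$), it vanishes in $L^p$ by dominated convergence.

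The switching part $R^\varepsilon_t$ is the crux. Since the $x$-argument is common to both evaluations, $R^\varepsilon_t$ is supported on $\{\alpha^{\varepsilon h}_t\neq\alpha_t\}$, and the boundedness of $f$ gives $|R^\varepsilon_t|\le 2\|f\|_\infty\,\mathbf 1_{\{\alpha^{\varepsilon h}_t\neq\alpha_t\}}$, whence
\[
\mathbb{E}\Big[\Big|\frac{R^\varepsilon_t}{\varepsilon}\Big|^p\Big]
\le \frac{(2\|f\|_\infty)^p}{\varepsilon^p}\,\mathbb{P}\big(\alpha^{\varepsilon h}_t\neq\alpha_t\big).
\]
The whole matter thus reduces to proving that $\mathbb{P}(\alpha^{\varepsilon h}_t\neq\alpha_t)$ decays faster than every power of $\varepsilon$. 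I would attack this as in the proof of Lemma \ref{l.3.4}: couple $\alpha$ and $\alpha^{\varepsilon h}$ through the basic coupling, partition $[0,t]$ at a scale $\eta$, and bound on each subinterval the probability that the coupled pair splits in terms of the switching rates and of the size of $X^{\varepsilon h}-X$ furnished by Lemma \ref{l.3.5}; a union bound over the $t/\eta$ subintervals together with the choice $\eta<\varepsilon^q$, $q>p$, is intended to yield $\mathbb{P}(\alpha^{\varepsilon h}_t\neq\alpha_t)=o(\varepsilon^p)$. I expect this to be the main obstacle, since the $Q$-matrix is assumed only bounded: one cannot compare $Q(X^{\varepsilon h}_s)$ with $Q(X_s)$ pointwise, so unlike in Lemma \ref{l.3.4}, where the smallness arises from a time integral, here one must extract the decay at the fixed time $t$ from the coupling structure alone and from the freedom to refine the partition independently of $\varepsilon$.

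Finally, for the identification of the Malliavin derivative I would rewrite the limit as
\[
\nabla f(X_t,\alpha_t)D^hX_t = \big\langle \nabla f(X_t,\alpha_t)DX_t,\,h\big\rangle_H .
\]
By Theorem \ref{main} we have $DX_t\in L^{\infty-}(\Omega_1; H\otimes\mathbb{R}^n\otimes V)$, and since $\nabla f$ is bounded the candidate $\nabla f(X_t,\alpha_t)DX_t$ lies in $L^{\infty-}(\Omega_1; H\otimes V)$. The $L^p$ convergence established above, valid for every $p\ge 2$ and hence for every $p\ge 1$ by Hölder's inequality, is exactly the defining property of $\mathbb{D}^{1,\infty}(V)$; therefore $f(X_t,\alpha_t)\in\mathbb{D}^{1,\infty}(V)$ with $Df(X_t,\alpha_t)=\nabla f(X_t,\alpha_t)DX_t$.
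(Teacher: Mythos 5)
Your decomposition is essentially the paper's own: the paper also isolates the switching increment $f(X^{\varepsilon h}_t,\alpha^{\varepsilon h}_t)-f(X^{\varepsilon h}_t,\alpha_t)$, a Taylor remainder in the $x$-variable, and a term handled by Theorem \ref{main}. For the Taylor remainder the paper uses the $C^2_b$ bound directly,
$\mathbb{E}\bigl|\varepsilon^{-1}\bigl(f(X^{\varepsilon h}_t,\alpha_t)-f(X_t,\alpha_t)\bigr)-\nabla f(X_t,\alpha_t)\,\varepsilon^{-1}(X^{\varepsilon h}_t-X_t)\bigr|^p\le C\varepsilon^{-p}\mathbb{E}|X^{\varepsilon h}_t-X_t|^{2p}\le C\varepsilon^{p}$
by Lemma \ref{l.3.5}; your mean-value-theorem plus dominated-convergence variant is equally valid, and your closing identification of $Df(X_t,\alpha_t)$ matches the paper.

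The gap is the switching term, exactly where you stop. Taking $f(x,i)=\phi(i)$ with distinct values shows your reduction is sharp: for such $f$ the error is exactly $\varepsilon^{-1}(\phi(\alpha^{\varepsilon h}_t)-\phi(\alpha_t))$, so the theorem's claim is equivalent to $\mathbb{P}(\alpha^{\varepsilon h}_t\neq\alpha_t)=o(\varepsilon^{p})$; you do not prove this, and your own doubts about the proposed route are well founded. (For comparison, the paper's entire treatment of this term is the sentence ``Similarly to the proof of Lemma \ref{l.3.4}, we can prove \dots'', so you have not missed a detailed argument -- you have hit the point the paper leaves unelaborated.) The mechanism of Lemma \ref{l.3.4} does not transplant to a fixed time: there, the smallness comes from the time integral -- conditionally on the chains agreeing at a gridpoint $k\eta$, the probability of disagreement at $s\in[k\eta,(k+1)\eta)$ is $O(\eta)$, which integrated over the window gives $O(\eta^2)$ -- and it says nothing about paths on which the chains separated before $k\eta$. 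At fixed $t$ the number $\mathbb{P}(\alpha^{\varepsilon h}_t\neq\alpha_t)$ does not depend on any partition, so no choice $\eta<\varepsilon^{q}$ can manufacture decay: a bound $C\eta$ valid for all $\eta$ would force this probability to vanish identically, which is false. What actually governs separation is the coupling through the common Poisson random measure: started together, the two chains split at a rate comparable to $\sum_{i\neq j}|q_{ij}(X_s)-q_{ij}(X^{\varepsilon h}_s)|$, the measure of the symmetric difference of $\triangle_{ij}(X_s)$ and $\triangle_{ij}(X^{\varepsilon h}_s)$, and once split they are still split at time $t$ with probability bounded below (for instance on the event that $N$ has no further atoms in the remaining time). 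Since $Q$ is assumed only bounded and measurable, this rate need not be small at all; even if $Q$ were Lipschitz, Lemma \ref{l.3.5} would give only a rate $O(\varepsilon)$, hence $\mathbb{P}(\alpha^{\varepsilon h}_t\neq\alpha_t)=O(\varepsilon)$ rather than $o(\varepsilon^{p})$, and the displayed $p$-th moment would be of order $\varepsilon^{1-p}$. So the missing estimate is not a routine adaptation of Lemma \ref{l.3.4}; it is the crux of the theorem, for your proposal and for the paper's proof alike, and your analysis has correctly located it.
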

\begin{proof}
We can write
\begin{eqnarray}
&&\mathbb{E}\left|\frac{f(X^{\varepsilon  h}_{t}, \alpha_t)-f(X_t, \alpha_t)}{\varepsilon }-\nabla f(X_t, \alpha_t)D^{h}X_t\right|^p\nonumber\\
&& \qquad \leq
 C\mathbb{E}\left|\frac{f(X^{\varepsilon  h}_{t}, \alpha_t)
-f(X_t, \alpha_t)}{\varepsilon }-\nabla f(X_t, \alpha_t)\frac{X^{\varepsilon  h}_{t}-X_t}{\varepsilon }\right|^p\nonumber\\
&&\qquad\qquad +C\mathbb{E}\left|\nabla f(X_t, \alpha_t)\frac{X^{\varepsilon  h}_{t}-X_t}{\varepsilon }
-\nabla f(X_t, \alpha_t)D^{h}X_t\right|^p. \label{e.3.15}
\end{eqnarray}
For the second term in \eref{e.3.15}, by Theorem  \ref{main} we have
\begin{equation}
\lim_{\varepsilon \rightarrow 0}\mathbb{E}\left|\nabla f(X_t, \alpha_t)\frac{X^{\varepsilon  h}_{t}-X_t}{\varepsilon }
-\nabla f(X_t, \alpha_t)D^{h}X_t\right|^p=0.
\end{equation}
Consider now the first term in \eref{e.3.15}, since $f\in C^{2}_b(\mathbb{R}^n\times\mathbb{S})$ we have
\begin{eqnarray}
&&\mathbb{E}\left|\frac{f(X^{\varepsilon  h}_{t},\alpha_t)
-f(X_t,\alpha_t)}{\varepsilon }-\nabla f(X_t, \alpha_t)\frac{X^{\varepsilon  h}_{t}-X_t}{\varepsilon }\right|^p\nonumber\\
&&\qquad\qquad \leq  C\mathbb{E}\frac{|X^{\varepsilon  h}_{t}-X_t|^{2p}}{\varepsilon ^p}\leq C\varepsilon ^p. \label{e.3.19}
\end{eqnarray}
From  \eref{e.3.15}-\eref{e.3.19} the theorem follows.
\end{proof}

 \section{Smoothness of the density}
 \setcounter{equation}{0}

In this section we show that under suitable non degeneracy assumptions on the coefficients, for any $t>0$ the    random vector $X_t$ has a smooth density.
 To this end  we first study  the stochastic flow associated with equation \eref{e.1.x} and then we show
that the determinant of the Malliavin covariance matrix of $X_t$ has    finite negative moments of all orders  for any $t>0$, under
a uniform H\"{o}rmander's condition.

\begin{definition}
Suppose that $F(x, \alpha):\Om\rightarrow \RR^n$ is a measurable  function
for all $x\in \RR^n$ and $\al\in \SS$.  We say that its  gradient
  with respect to $x$ exists  (in mean square sense)
if there is $A(x,\al): \Om \rightarrow \RR^{n^2}$ such that
for any $\xi\in \RR^n$ we have
\[
\lim_{\varepsilon \rightarrow 0}\mathbb{E}\left|\frac{F(x+\varepsilon \xi, \alpha)
-F(x, \alpha)}{\varepsilon }-A(x,\al)\xi \right|^{2}=0\,.
\]
We denote the gradient matrix $A(x,\al)$ by $\nabla  F(x, \alpha)$.
\end{definition}

By arguments similar to those used in the proof of   Theorem \ref{main},  we can
 obtain the following results.

\begin{theorem}
Assume the hypothesis \textbf{(${\bf H}_2$)} holds. Let $\left\{(X_{s, t}(x, \alpha),
\alpha_{s, t}(\alpha))\,, t\ge s\right\}$ be the solution of equations (\ref{e.1.x}) and (\ref{1.alpha}),
which starts from
$(x,\al)$ at time $s$. Then  the gradient  of $X_{s, t}(x, \alpha)$
with respect to $x$  (in mean square) exists.  If  we denote
\[
J_{s,t}  := \nabla X_{s, t}(x, \alpha)\,,
\]
then
\begin{equation}
\begin{cases}
\displaystyle dJ_{s,t}  =\nabla b(X_t, \alpha_t)J_{s,t}  dt
+\sum^{d}_{i=1}\nabla\sigma_{i}(X_t, \alpha_t)J_{s,t} dW^{i}_t,\quad t\ge s \\
J_{s,s}  =I\,,\label{e.4.1}
\end{cases}
\end{equation}
where $I$ is the $n$-dimensional identity matrix. Moreover, $J_{s,t}$ is invertible and its inverse $J_{s,t}^{-1}$
satisfies
\begin{equation}
\begin{cases}
\displaystyle dJ^{-1}_{s,t} = -J^{-1}_{s,t}\left(\nabla b(X_t, \alpha_t)-\sum^d_{i=1}\nabla\sigma_{i}(X_t, \alpha_t)
\nabla\sigma_{i}(X_t, \alpha_t)\right)dt   \\
\displaystyle \ \qquad \quad  -\sum^d_{i=1}J^{-1}_{s,t}\nabla\sigma_{i}(X_t, \alpha_t)dW^i_t,\\
J^{-1}_{s,s}= I.
\end{cases}
\end{equation}
\end{theorem}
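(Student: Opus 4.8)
The plan is to establish the two assertions in turn: first the existence of the mean-square gradient $J_{s,t}$ together with the linear equation \eref{e.4.1}, by adapting the perturbation argument of Theorem \ref{main} to a perturbation of the initial point $x$; and then the invertibility together with the equation for $J^{-1}_{s,t}$, by a verification argument based on It\^o's formula.

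For the first part, fix $\xi\in\mathbb{R}^n$ and let $(X^{\xi,\varepsilon}_t,\alpha^{\xi,\varepsilon}_t)$ denote the solution of \eref{e.x} and \eref{2.3} started from $(x+\varepsilon\xi,\alpha)$ at time $s$, driven by the same Brownian motion $W$ and the same Poisson measure $N$. Forming the difference quotient $\varepsilon^{-1}(X^{\xi,\varepsilon}_t-X_{s,t})$ and subtracting the candidate limit $J_{s,t}\xi$ (the solution of \eref{e.4.1} applied to $\xi$), I would split each increment of $b$ and of $\sigma$ into a part in which only the continuous variable moves, with the discrete variable frozen, and a part in which only the switching variable moves, with $X$ frozen. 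The second group of terms assembles into a switching-error process entirely analogous to the process $\phi^\varepsilon_t$ in the proof of Theorem \ref{main}.

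The heart of the argument, and the main obstacle, is to show that this switching-error process tends to zero in $L^p(\Omega)$ uniformly on $[s,T]$. This is carried out exactly as in Lemma \ref{l.3.4}: one partitions $[s,T]$ into subintervals of length $\eta$, uses the uniform boundedness of the $Q$-matrix (hypothesis (i)) to control block by block the probability that the switching process jumps, and invokes the basic coupling of $Q(X^{\xi,\varepsilon}_t)$ and $Q(X_t)$ to estimate the probability that the two switching processes disagree. The resulting bound is of order $\eta/\varepsilon^p$, so choosing $\eta$ small relative to $\varepsilon$ (say $\eta<\varepsilon^q$ with $q>p$) forces the error to vanish. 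Combining this with the a priori moment and time-increment estimates of Lemmas \ref{l.3.2} and \ref{l.3.3}, and with the bound $\mathbb{E}[\sup_{s\le t\le T}|X^{\xi,\varepsilon}_t-X_{s,t}|^p]\le C\varepsilon^p$ from Lemma \ref{l.3.5}, I would apply the mean value theorem twice to the increments of $b$ and $\sigma$ (as in the proof of Theorem \ref{main}) and then use Gronwall's inequality to identify the $L^p$-limit of $\varepsilon^{-1}(X^{\xi,\varepsilon}_t-X_{s,t})$ as $J_{s,t}\xi$. This proves that the mean-square gradient exists and that $J_{s,t}=\nabla X_{s,t}$ solves \eref{e.4.1}. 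As in Theorem \ref{main}, the key structural point is that the variational equation \eref{e.4.1} carries no jump term: the switching enters only through the bounded adapted coefficients $\nabla b(X_t,\alpha_t)$ and $\nabla\sigma_i(X_t,\alpha_t)$.

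For the invertibility I would argue by verification. Under Hypothesis \textbf{(${\bf H}_2$)} these coefficients are bounded adapted processes, so \eref{e.4.1} is a standard linear matrix It\^o SDE whose solution $J_{s,t}$ is continuous with moments of all orders; the same holds for the linear SDE stated for $J^{-1}_{s,t}$, whose solution I denote $K_{s,t}$. Applying It\^o's product formula to $K_{s,t}J_{s,t}$, the martingale part vanishes because the diffusion coefficient of $K_{s,t}$ is $-K_{s,t}\nabla\sigma_i(X_t,\alpha_t)$, while the drift part vanishes because the It\^o cross-variation term $\sum_i K_{s,t}\nabla\sigma_i(X_t,\alpha_t)\nabla\sigma_i(X_t,\alpha_t)J_{s,t}$ exactly cancels the extra term $\sum_i\nabla\sigma_i\nabla\sigma_i$ in the drift of the equation for $J^{-1}_{s,t}$. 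Hence $d(K_{s,t}J_{s,t})=0$, and since $K_{s,s}J_{s,s}=I$ we obtain $K_{s,t}J_{s,t}=I$ for all $t\ge s$. As $J_{s,t}$ is a square matrix this gives invertibility with $J^{-1}_{s,t}=K_{s,t}$, which solves the stated equation. Here again the absence of jump terms in both SDEs is precisely what permits this classical It\^o computation despite the jumps of $(X_t,\alpha_t)$.
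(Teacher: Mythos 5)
Your proposal is correct and follows essentially the same route as the paper: the paper gives no detailed proof of this theorem, asserting it ``by Theorem 4.2 in \cite{YZ} or by arguments similar to those used in the proof of Theorem \ref{main}'', and your first part is precisely that adaptation of the perturbation-plus-coupling argument (Lemmas \ref{l.3.2}--\ref{l.3.5}) to a shift of the initial condition $x$ rather than of the Brownian path. Your It\^o product-rule verification that $K_{s,t}J_{s,t}=I$, with the cross-variation term cancelling the extra $\sum_{i}\nabla\sigma_{i}\nabla\sigma_{i}$ drift term, is the standard completion of the invertibility claim, which the paper likewise leaves to the cited reference.
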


 The following lemma provides estimates on the $L^p$-norm of the gradient of the solution and its inverse.

\begin{lemma} \label{l.4.3}Assume that Hypothesis \textbf{(${\bf H}_2$)}   holds. Then for any $p\geq 2$,
there exists a positive constant  $  C $  depending only on $T$ and $p$
such that
\[
\mathbb{E}\left[\sup_{s\leq t\leq T}\left(|J_{s,t}|^{p}+|J^{-1}_{s,t}|^{p}\right)
\right]\leq C \,.
\]
\end{lemma}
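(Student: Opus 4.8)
The plan is to bound the two matrix processes $J_{s,t}$ and $J^{-1}_{s,t}$ separately, using in each case the standard $L^p$ estimate for linear matrix-valued stochastic differential equations driven by a Brownian motion with bounded (in fact, uniformly bounded) coefficients. The crucial observation, which makes the argument go through exactly as in the classical diffusion case, is that although the coefficients $\nabla b(X_t,\alpha_t)$ and $\nabla\sigma_i(X_t,\alpha_t)$ depend on the switching variable $\alpha_t$, Hypothesis \textbf{(${\bf H}_2$)} guarantees that $b$ and $\sigma_i$ have bounded first partial derivatives in $x$, \emph{uniformly in $\alpha\in\mathbb{S}$} (the state space $\mathbb{S}$ is finite). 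Hence the matrix norms $\|\nabla b(X_t,\alpha_t)\|$ and $\|\nabla\sigma_i(X_t,\alpha_t)\|$ are bounded by a deterministic constant $C$ almost surely, regardless of the values taken by $X_t$ and $\alpha_t$. This uniform boundedness is what allows us to avoid any issue coming from the jumps of $\alpha_t$: equations \eref{e.4.1} and its companion for $J^{-1}_{s,t}$ are continuous (pure It\^o) equations with no jump terms, so no special treatment of the switching times is required here.

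First I would treat $J_{s,t}$. Fix $p\ge 2$ and $s\in[0,T]$. Starting from the integral form of \eref{e.4.1}, namely
\[
J_{s,t}=I+\int_s^t \nabla b(X_r,\alpha_r)J_{s,r}\,dr
+\sum_{i=1}^d\int_s^t \nabla\sigma_i(X_r,\alpha_r)J_{s,r}\,dW^i_r,
\]
I would raise to the $p$-th power, take the supremum over $t\in[s,u]$ for $u\le T$, and apply H\"{o}lder's and Burkholder-Davis-Gundy's inequalities to the drift and martingale parts respectively. Using the uniform bound $\|\nabla b(X_r,\alpha_r)\|\vee\|\nabla\sigma_i(X_r,\alpha_r)\|\le C$ coming from \textbf{(${\bf H}_2$)}, this yields
\[
\mathbb{E}\Big[\sup_{s\le t\le u}\|J_{s,t}\|^p\Big]
\le C\Big(1+\int_s^u \mathbb{E}\big[\sup_{s\le r\le v}\|J_{s,r}\|^p\big]\,dv\Big),
\]
and Gronwall's lemma then gives the uniform bound $\mathbb{E}[\sup_{s\le t\le T}\|J_{s,t}\|^p]\le C$ with $C$ depending only on $T$ and $p$ (and not on $s$). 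The estimate for $J^{-1}_{s,t}$ is entirely analogous: its governing equation is again linear with coefficients $-J^{-1}_{s,t}(\nabla b-\sum_i(\nabla\sigma_i)^2)$ and $-J^{-1}_{s,t}\nabla\sigma_i$, and under \textbf{(${\bf H}_2$)} these coefficients are likewise uniformly bounded, so the same BDG-plus-Gronwall scheme produces $\mathbb{E}[\sup_{s\le t\le T}\|J^{-1}_{s,t}\|^p]\le C$. Combining the two bounds and using $(a+b)^p\le 2^{p-1}(a^p+b^p)$ gives the stated conclusion.

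I do not expect a genuine obstacle here, since the argument is a direct transcription of the classical linear-SDE moment estimate. The only point requiring a moment of care is verifying that the presence of $\alpha_t$ causes no difficulty, and this is resolved precisely by the finiteness of $\mathbb{S}$ together with the uniform boundedness of the partial derivatives in \textbf{(${\bf H}_2$)}, as noted above. A secondary technical point is to keep track of the dependence on the starting time $s$; because the bounds on the coefficients are independent of $s$ and the Gronwall constant depends only on the length $T-s\le T$, the final constant $C$ can indeed be taken to depend only on $T$ and $p$, as claimed in the statement.
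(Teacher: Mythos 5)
Your proposal is correct and follows essentially the same route as the paper: write $J_{s,t}$ (and $J^{-1}_{s,t}$) in integral form, use the uniform boundedness of $\nabla b$ and $\nabla\sigma_i$ from \textbf{(${\bf H}_2$)} together with H\"{o}lder's and Burkholder--Davis--Gundy's inequalities to get a closed integral inequality, and conclude by Gronwall's lemma, treating the inverse flow analogously. Your additional remarks (finiteness of $\mathbb{S}$ making the bounds uniform in $\alpha$, absence of jump terms in the flow equations, and independence of the constant from $s$) are accurate and only make explicit what the paper leaves implicit.
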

\begin{proof}
From
\[
J_{s,t} =I+\int^t_s \nabla b(X_r, \alpha_r)J_{s,r}  dr+\int^t_s \nabla\sigma(X_r, \alpha_r)J_{s,r}  dW_r.\,
\]
we obtain
$$
|J_{s,t} |^{p}\le C\left[1+ \left |\int^t_s \nabla b(X_r, \alpha_r)J_{s,r}  dr\right|^p
+\left|\int^t_s \nabla\sigma(X_r, \alpha_r)J_{s,r}  dW_r\right|^p\right]
$$
for any $p\geq2$. Then by H\"{o}lder's inequality, we can write
$$
\mathbb{E}\left[\sup_{s\leq t\leq T}\left|\int^t_s \nabla b(X_r, \alpha_r)J_{s,r}  dr\right|^p\right]
\leq C\int^T_s \mathbb{E}|J_{s,r} |^p dr,
$$
and by  Burkholder-Davis-Gundy's inequality we have
$$\mathbb{E}\left[\sup_{s\leq t\leq T}\left|\int^t_s\nabla\sigma(X_r, \alpha_r)J_{s,r}  dW_r\right|^{p}\right]
\leq C\int^T_s\mathbb{E}|J_{s,r} |^{p}ds.$$
Hence, we have
$$\mathbb{E}\left[\sup_{s\leq t\leq T}|J_{s,t} |^{p}\right]
\leq C +C\int^T_s\mathbb{E}|J_{s,r} |^{p}ds.
$$
Gronwall's inequality yields
$$\mathbb{E}\left[\sup_{s\leq t\leq T}|J_{s,t} |^{p}\right]\leq C  .$$
Similarly,
$$\mathbb{E}\left[\sup_{s\leq t\leq T}|J^{-1}_{s,t} |^{p}\right]\leq C  .$$
The proof  of the lemma is now complete.
\end{proof}

\vspace{0.3cm}

Using  the gradient of the flow  $J_{s,t} $ we can represent the Malliavin derivative $D X_t$ as follows:
\[
D_{s}X_t=J_{s,t}\sigma(X_s, \alpha_s), \quad 0\le s\leq t\le T\,;\quad \quad
 D_{s}X_t=0, s>t \,.
  \]

Next, we shall  study the Malliavin differentiability  of $J_{s, t}$. Denote by  by $D^i_r$  the Malliavin derivative with respect to the $i$-th component of the Brownian motion $W$  at time $r$.

\begin{lemma} \label{l.4.4} Suppose that  Hypothesis \textbf{(${\bf H}_3$)} holds. Then the following two statements hold:
\begin{enumerate}[label=(\roman*)]
\item   For all $0\leq s\leq t\leq T$,  $J_{s,t}\in \mathbb{D}^{1, \infty}(\mathbb{R}^n\otimes\mathbb{R}^n\otimes V)$  and
 for any $p\geq 2$,
there exists a positive constant  $C $  depending on $T$, $p$ and $x$, such that for all $i=1,\dots, d$ and  $r\in [0,T]$
\[
\mathbb{E}\left[\sup_{s\leq t\leq T}|D^i_r J_{s,t}|^{p}\right]\leq C.
\]
\item For any  $t\leq T$,  $X_t\in \mathbb{D}^{2, \infty}(\mathbb{R}^n\otimes V)$ and
  for any $p\geq 2$, there exists a positive constant $C$ depending on $T$, $p$ and $x$,
such that for all $i, j=1,\dots, d$ and  $r,s\le t $
\[
\mathbb{E}|D^i_r(D^{j}_{s}X_t)|^{p}\leq C.
\]
\end{enumerate}
\end{lemma}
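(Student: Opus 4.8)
The plan is to establish both statements by differentiating the linear stochastic differential equations satisfied by $J_{s,t}$ and $J_{s,t}^{-1}$, exploiting the fact already proved in Theorem \ref{main} that the Malliavin derivative does not act on the switching variable $\alpha_t$. For part (i), I would first argue that the coefficients of equation \eref{e.4.1}, namely $\nabla b(X_t,\alpha_t)$ and $\nabla\sigma_i(X_t,\alpha_t)$, are themselves Malliavin differentiable. Since Hypothesis \textbf{(${\bf H}_2$)} gives bounded second derivatives of $b$ and $\sigma_i$ in $x$, and since by the chain rule (Theorem 3.2) the derivative $D^j_s$ applied to $\nabla b(X_t,\alpha_t)$ produces $\nabla^2 b(X_t,\alpha_t)D^j_sX_t$ with no contribution from $\alpha_t$, these coefficients lie in $\mathbb{D}^{1,\infty}$ with the requisite uniform bounds coming from Lemma \ref{l.4.3} and the representation $D_sX_t=J_{s,t}\sigma(X_s,\alpha_s)$. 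The key structural point is that $\alpha_t$ remains frozen under differentiation, so formally $D^i_r J_{s,t}$ satisfies a \emph{linear} SDE, inhomogeneous but driven by the same coefficients as \eref{e.4.1}.

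Concretely, applying $D^i_r$ to the integral form $J_{s,t}=I+\int_s^t \nabla b\,J_{s,u}\,du+\int_s^t\sum_k\nabla\sigma_k\,J_{s,u}\,dW^k_u$ and commuting the derivative past the stochastic integral (valid on $\Omega_1$ for $r$ in the appropriate range) yields an equation of the form
\begin{equation*}
D^i_r J_{s,t}=\Xi^i_r + \int_r^t \nabla b(X_u,\alpha_u)\,D^i_r J_{s,u}\,du+\int_r^t\sum_k\nabla\sigma_k(X_u,\alpha_u)\,D^i_r J_{s,u}\,dW^k_u,
\end{equation*}
where $\Xi^i_r$ collects the inhomogeneous terms involving $\nabla^2 b$, $\nabla^2\sigma_k$, the Jacobian factors $D^i_r X_u=J_{r,u}\sigma(X_r,\alpha_r)$, and the initial impulse $\nabla\sigma_i(X_r,\alpha_r)J_{s,r}$ arising from differentiating the $dW^i$ integral at time $r$. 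The $L^p$-norm of $\Xi^i_r$ is bounded uniformly in $r,s,t$ by Lemma \ref{l.4.3} together with the boundedness of $\nabla^2 b$ and $\nabla^2\sigma_k$. Then Burkholder--Davis--Gundy and H\"older's inequalities give a Gronwall-type estimate $\mathbb{E}[\sup_{s\le t\le T}\|D^i_r J_{s,t}\|^p]\le C+C\int_r^T\mathbb{E}\|D^i_r J_{s,u}\|^p\,du$, and Gronwall's lemma closes the bound. To make this rigorous rather than formal, I would follow the pattern of Theorem \ref{main}: define the candidate process as the solution of the linear SDE above, then show that the difference quotients $\varepsilon^{-1}(J^{\varepsilon h}_{s,t}-J_{s,t})$ converge to $\langle DJ_{s,t},h\rangle_H$ in $L^{\infty-}$, using the estimates of Lemmas \ref{l.3.4} and \ref{l.3.5} to control the switching-induced error terms.

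For part (ii), I would combine part (i) with the representation $D^j_sX_t=J_{s,t}\sigma(X_s,\alpha_s)$. Applying $D^i_r$ and using the Leibniz rule gives $D^i_r(D^j_sX_t)=(D^i_r J_{s,t})\sigma(X_s,\alpha_s)+J_{s,t}\,D^i_r(\sigma(X_s,\alpha_s))$, where the second term equals $J_{s,t}\nabla\sigma(X_s,\alpha_s)D^i_rX_s$ by the chain rule, again with no $\alpha$-contribution. The uniform $L^p$ bound then follows immediately from part (i), Lemma \ref{l.4.3}, the boundedness of $\sigma$ and $\nabla\sigma$ under \textbf{(${\bf H}_2$)}, and H\"older's inequality. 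This shows $X_t\in\mathbb{D}^{2,\infty}(\mathbb{R}^n\otimes V)$.

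The main obstacle I anticipate is not the algebra of the linear equations but the rigorous justification of differentiating through the equation in the presence of the jump process $\alpha_t$: one must verify that the switching term genuinely contributes nothing to $D^i_r J_{s,t}$, which rests on the phenomenon observed after Theorem \ref{main} that $\alpha_t$ is unperturbed by shifts of the Brownian path. I would therefore handle this exactly as in the proof of Theorem \ref{main}, introducing the perturbed flow $J^{\varepsilon h}_{s,t}$, writing the difference quotient, and isolating a remainder $\phi^\varepsilon$-type term whose $L^p$-norm vanishes as $\varepsilon\to0$ by the coupling argument of Lemma \ref{l.3.4}. The bookkeeping is heavier here because the inhomogeneous term $\Xi^i_r$ itself involves products of flows and second derivatives, so care is needed to keep all $L^p$-estimates uniform in the parameters $r,s,t$, but no genuinely new idea beyond those of Section 3 and Lemma \ref{l.4.3} is required.
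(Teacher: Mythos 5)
Your proposal follows essentially the same route as the paper: differentiate the linear SDE \eref{e.4.1} to obtain the inhomogeneous linear equation for $D^i_r J_{s,t}$ (with the impulse $\nabla\sigma_i(X_r,\alpha_r)J_{s,r}$ and the $\nabla^2 b$, $\nabla^2\sigma_k$ terms), justify it by the difference-quotient argument of Theorem \ref{main} so that the switching process stays frozen, close the estimate with H\"older, Burkholder--Davis--Gundy, Lemma \ref{l.4.3} and Gronwall, and then deduce (ii) from $D^j_s X_t = J_{s,t}\sigma_j(X_s,\alpha_s)$ by the chain rule and part (i). This matches the paper's proof in both structure and the key estimates, so no further comparison is needed.
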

\begin{proof} First, we prove (i).
By equation \eref{e.4.1},  the definition of Malliavin derivative  and the arguments  used in  the proof of  Theorem 3.1,
we obtain
$$
\lim_{\varepsilon \rightarrow 0}\mathbb{E}_1
\left\|\frac{J_{s,t}(\omega_1+\varepsilon \int^{\cdot}_0 h_s ds, \omega_2)
-J_{s,t}(\omega_1, \omega_2)}{\varepsilon }-\langle DJ_{s,t}, h
\rangle_H\right\|^{p}_{\mathbb{R}^n\otimes\mathbb{R}^n\otimes V}=0
$$
for any $h\in H$ and any $p\geq 2$, where $DJ_{s,t}$ satisfies the following equation for all $r\leq t$
\begin{eqnarray}
D^{i}_{r}J_{s,t}=\!\!\!\!\!\!\!\!&&\nabla\sigma_{i}(X_r, \alpha_r)J_{s,r}\, {\mathbf 1}_{\{ s\leq r\leq t\}}\nonumber\\
&&+\int^t_{r\vee s}\Big[\nabla^2b(X_{u}, \alpha_u)(D^i_r X_u, J_{s,u})+\nabla b(X_u, \alpha_u)D^{i}_{r}J_{s,u}\Big] du\nonumber\\
&&+\int^t_{r\vee s}\sum^{d}_{k=1}\Big[\nabla^2\sigma_{k}(X_u, \alpha_u)(D^i_r X_u, J_{s,u})
+\nabla\sigma_{k}(X_u, \alpha_u)D^{i}_{r}J_{s,u}\Big]dW^{k}_u,\nonumber\\
\end{eqnarray}
and for $r>t$, $D^{i}_{r}J_{s,t}=0$.
For $s\leq r\leq t$,    from the above identity we deduce the following estimates
\begin{eqnarray*}
|D^{i}_{r}J_{s,t} |^{p}\leq\!\!\!\!\!\!\!\!&&C|\nabla\sigma_{i}(X_r, \alpha_r)J_{s,r} |^p\nonumber\\
&&+C\left|\int^t_{r}\Big[\nabla^2b(X_{u}, \alpha_u)(D^i_r X_u, J_{s,u})+\nabla b(X_u, \alpha_u)D^{i}_{r}J_{s,u} \Big]du\right|^p\nonumber\\
&&+C\left|\int^t_{r}\Big[\nabla^2\sigma(X_u, \alpha_u)(D^i_r X_u, J_{s,u} )+\nabla\sigma(X_u, \alpha_u)D^{i}_{r}J_{s,u}\Big]dW_u\right|^p.
\end{eqnarray*}
Applying   H\"{o}lder's  and
Burkholder-Davis-Gundy's inequality, we have
\begin{eqnarray*}
\mathbb{E}\left[\sup_{r\leq t\leq T}|D^i_r J_{s,t} |^{p}\right]
\leq\!\!\!\!\!\!\!\!&&C|J_{s,r} |^{p}+C\int^T_r\mathbb{E}(|D^i_r X_u|^{p}\cdot|J_{s,u} |^{p})du
\\
&&
+C \int^T_r\mathbb{E}|D^i_r J_{s,u} |^{p}du\\
\leq\!\!\!\!\!\!\!\!&&C|J_{s,r} |^{p}+C\int^T_r(\mathbb{E}|D^i_r X_u|^{2p})^{1/2}\cdot(\mathbb{E}|J_{s,u} |^{2p})^{1/2}du\\
&&+C\int^T_r\mathbb{E}|D^i_r J_{s,u} |^{p}du.
\end{eqnarray*}
Hence, Lemma \ref{l.4.3} and Gronwall's inequality yield
\begin{equation}
\mathbb{E}\left[\sup_{r\leq t\leq T}|D^i_r J_{s,t} |^{p}\right]\leq Ce^{C(T-r)}.
\end{equation}
The case    $r< s$ can be handled in a similar way and hence the statement (i) is proved.

(ii) Note that for any  $j=1,\ldots, d$,
\[
D^{j}_{s}X_t =J_{s,t}\sigma_{j}(X_s, \alpha_s) \ \hbox{when $s\leq t$ and}\quad
 D^{j}_{s}X_t=0\  \hbox{when $s>t$} \,.
\]
An application of chain rule, which can be easily established,   yields
$$D^i_r(D^{j}_{s}X_t)=(D^i_r J_{s,t})\sigma_j(X_{s}, \alpha_s)+J_{s,t}\nabla\sigma_{j}(X_{s}, \alpha_s)D^j_r X_s.$$
Then, H\"older's inequality gives
\begin{eqnarray*}
 \mathbb{E}|D^i_r(D^{j}_{s}X_t)|^{p}\leq\!\!\!\!\!\!\!\!&& C\mathbb{E}\left|(D^i_r J_{s,t})\sigma_j(X_{s}, \alpha_s)\right|^{p}
+C\mathbb{E}\left|J_{s,t}\nabla\sigma_{j}(X_{s}, \alpha_s)D^j_r X_s\right|^{p}\\
\leq\!\!\!\!\!\!\!\!&&C \left(\mathbb{E}|D^i_r J_{s,t}|^{2p}\right)^{1/2}[\mathbb{E}(1+|X_{s}|^{2p})]^{1/2}\\
&&+C(\mathbb{E}|J_{s,t}|^{2p})^{1/2}(\mathbb{E}|D^i_r X_s|^{2p})^{1/2}
\leq  C,
\end{eqnarray*}
which   implies (ii).
\end{proof}

\begin{Rem}\label{r.4.5}Following the same  procedure as above we can prove that if Hypothesis \textbf{(${\bf H}_\infty$)} holds,   then
$J_{s, t}\in\mathbb{D}^{\infty}(\mathbb{R}^n\otimes\mathbb{R}^n\otimes V)$
and $ X_t\in\mathbb{D}^{\infty}(\mathbb{R}^n\otimes V)$.
\end{Rem}

\vspace{0.3cm}

We denote by     $(DX_t)^*$  the transpose of the random matrix  $DX_t$. From the relation between
 $DX_t$ and $J_{s, t}$,  we have
$(DX_t)^*(r) =\sigma(X_r, \alpha_r)^{\ast}J^{\ast}_{r,t} $.
Then, the Malliavin matrix $M_t$ of the random vector $X_t$ is defined by:
\begin{eqnarray*}
M_t =\!\!\!\!\!\!\!\!&&\langle DX_t \,, (DX_t)^*\rangle _H=\int^t_0 J_{s,t}\sigma(X_s, \alpha_s)\sigma(X_s, \alpha_s)^{\ast}J_{s,t}^{\ast}ds\\
=\!\!\!\!\!\!\!\!&&J_{0,t}\int^t_0 J^{-1}_{0,s}\sigma(X_s, \alpha_s)\sigma(X_s, \alpha_s)^{\ast}(J^{-1}_{0,s})^{\ast}ds J_{0,t}^{\ast} \\
=\!\!\!\!\!\!\!\!&&J_{0,t}C_t J_{0,t}^{\ast},
\end{eqnarray*}
where
\[
C_t=\int^t_0 J^{-1}_{0,s}\sigma(X_s, \alpha_s)\sigma(X_s, \alpha_s)^{\ast}(J^{-1}_{0,s})^{\ast}ds,
\]
is the so-called reduced Malliavin matrix of $X_t$.

\medskip
Our aim is to show that, under a suitable nondegeneracy condition on the coefficients,
 the Malliavin matrix $M_t$ is invertible $\mathbb{P}$-a.s. and the determinant of its inverse has
 negative moments of all orders.
  The difficulty in our current situation
is that the vector fields $b$ and $\si_1,\dots\,, \si_d$ depend on the Markovian switching process $\al_t$.
To overcome this difficulty we follow the following procedure inspired by  \cite{FLT}.

For $t \ge 0$ we  define $N_t:=N([0, t], m_0(m_0-1)K)$, so $\{N_t, t\ge 0\} $ is a Poisson process with
parameter $m_0(m_0-1)K$. Conditioned on the number of jumps of the Poisson process up to time $t$, that is, $N_t=k$,
 there exists a  random interval $[T_1, T_2]$ with $0\leq T_1< T_2\leq t$, such that
  $T_2-T_1\geq\frac{t}{k+1}$. This implies that $\alpha_t=\alpha_{T_1}$ for all $ t\in[T_1, T_2)$
  (because that the jump times of $\al_t$ are a subset of the jump times of $N_t$).
  On this random time interval, we will apply the classical techniques of Malliavin calculus.

To this end we need the following version of Norris lemma on time intervals.

\begin{lemma}\label{l.4.6}
Let  $t_1 \ge 0$ and let $\xi_1, \xi_2$ be two
$\cF_{t_1}$-measurable random variables. Suppose that $\beta(t), \gamma(t)=(\gamma_1(t),\dots, \gamma_d(t))$ and
$u(t)=(u_1(t),\dots, u_d(t))$ are $\cF_t$-adapted processes. For any $ t\geq t_1$, set
$$a(t)=\xi_1+\int^t_{t_1}\beta(s)ds+\sum^{d}_{i=1}\int^t_{t_1} \gamma_i(s)dW^{i}_s$$
$$Y(t)=\xi_2+\int^t_{t_1} a(s)ds+\sum^{d}_{i=1}\int^t_{t_1} u_i(s)dW^{i}_s$$ and assume that
for some $p\ge 2$ and $T>0$
\begin{equation}  \label{moment}
 \mathbb{E}\left(\sup_{t_1\leq t\leq  T}(|\beta(t)|+|\gamma(t)|+|a(t)|+|u(t)|)^{p}\right)<\infty.
\end{equation}
Consider $t_2\in[0,T]$ satisfying $t_2-t_1\geq c$ for some constant  $c>0$.
Then, for any $q>8$ and  $r>0$ such that $18r <q-8$, there exists $\varepsilon_0=  \delta_0 c^{\gamma_0}$, where the positive constants $\delta_0$ and $\gamma_0$ depend on $p$, $q$, $r$ and $T$, such that for all $\varepsilon\in(0,\varepsilon _0)$
$$\mathbb{P}\left\{\int^{t_2}_{t_1} Y^2_tdt<\varepsilon ^q, \int^{t_2}_{t_1}(|a(t)|^2+|u(t)|^2)dt
\geq \varepsilon \right\}\leq \varepsilon ^{rp}.$$
\end{lemma}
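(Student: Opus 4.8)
The plan is to adapt the classical Norris lemma to the random interval $[\tau_1,\tau_2]$, keeping track of how the guaranteed length $c$ enters. The first reduction is to confine attention to an event where the coefficients are only polynomially large: fix a small $\nu>0$ and set $B_0=\{\sup_{\tau_1\le t\le T}(|\beta(t)|+|\gamma(t)|+|a(t)|+|u(t)|)>\varepsilon^{-\nu}\}$. By Chebyshev's inequality and the moment bound \eqref{moment}, $\mathbb{P}(B_0)\le\varepsilon^{\nu p}\,\mathbb{E}[\sup(\cdots)^p]$, which is at most $\tfrac12\varepsilon^{rp}$ once $\nu>r$ and $\varepsilon$ is small; this already produces the polynomial factor in the conclusion. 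On $B_0^c$ I would run a three-step bootstrap driven by It\^o's formula, each step closed by the exponential martingale inequality, which stays valid between stopping times: applying the standard inequality to $\int_0^t\phi_s\mathbf 1_{(\tau_1,\tau_2]}(s)\,dW_s$ gives $\mathbb{P}(\sup_{\tau_1\le t\le\tau_2}|\int_{\tau_1}^t\phi\,dW|\ge\alpha,\ \int_{\tau_1}^{\tau_2}|\phi|^2\,ds\le\rho^2)\le 2e^{-\alpha^2/2\rho^2}$.

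The crucial and genuinely new step is the passage from $\int_{\tau_1}^{\tau_2}Y_t^2\,dt<\varepsilon^q$ to a uniform bound $\sup_{\tau_1\le t\le\tau_2}|Y_t|\le\varepsilon^{\theta}$, and it is here that $\tau_2-\tau_1\ge c$ is used. I would cut $[0,T]$ into consecutive deterministic blocks of length $\ell=\varepsilon^{2\nu+\mu}$ with $\mu>0$. On each block $I$ meeting $[\tau_1,\tau_2]$ in length $\gtrsim\ell$ (the two boundary blocks being folded into their neighbours, which is legitimate because $\ell\le c$), averaging against the $L^2$ bound produces a point $s_I$ with $|Y_{s_I}|\le(2\varepsilon^q/\ell)^{1/2}$; writing $Y_t=Y_{s_I}+\int_{s_I}^t a\,dr+(M_t-M_{s_I})$ with $M_t=\int_{\tau_1}^t u\,dW$, the drift increment over $I$ is at most $\varepsilon^{-\nu}\ell=\varepsilon^{\nu+\mu}$, while the oscillation of $M$ over $I$ has bracket at most $\varepsilon^{-2\nu}\ell=\varepsilon^{\mu}$ and is therefore $\le\varepsilon^{\mu/4}$ off a set of probability $\le 2e^{-\frac12\varepsilon^{-\mu/2}}$. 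Summing over the at most $T/\ell$ blocks keeps the total exceptional probability super-polynomially small, yielding $\sup_{[\tau_1,\tau_2]}|Y|\le\varepsilon^\theta$ with $\theta=\min\{\mu/4,\ \nu+\mu,\ (q-2\nu-\mu)/2\}$. The single constraint needed is $\ell\le c$, i.e. $\varepsilon^{2\nu+\mu}\le c$, and this is exactly what forces the threshold $\varepsilon_0=\delta_0c^{\gamma_0}$ with $\gamma_0=(2\nu+\mu)^{-1}$.

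With $\sup_{[\tau_1,\tau_2]}|Y|\le\varepsilon^\theta$ in hand, the endpoint values $Y_{\tau_1}=\xi_2$ and $Y_{\tau_2}$ are automatically of size $\varepsilon^\theta$, which is the point that makes the endpoint terms harmless, and the remaining two steps are routine. It\^o's formula gives
\[
\int_{\tau_1}^{\tau_2}|u_s|^2\,ds=Y_{\tau_2}^2-\xi_2^2-2\int_{\tau_1}^{\tau_2}Y_sa_s\,ds-2\int_{\tau_1}^{\tau_2}Y_su_s\,dW_s,
\]
where the first two terms are $O(\varepsilon^{2\theta})$, the drift integral is bounded by $\sup|Y|\,T^{1/2}(\int a^2)^{1/2}$, and the martingale has bracket $\le\sup|Y|^2\int|u|^2$, so the exponential inequality yields $\int_{\tau_1}^{\tau_2}|u|^2\le\varepsilon^{\theta_2}$ off a negligible set. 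For the drift I set $A_t=\int_{\tau_1}^t a\,dr=Y_t-\xi_2-\int_{\tau_1}^t u\,dW$; since $\int|u|^2$ is now small, the exponential inequality makes $\sup_t|A_t|$ small, and the integration by parts
\[
\int_{\tau_1}^{\tau_2}a_s^2\,ds=a_{\tau_2}A_{\tau_2}-\int_{\tau_1}^{\tau_2}A_s\beta_s\,ds-\int_{\tau_1}^{\tau_2}A_s\gamma_s\,dW_s,
\]
together with one more use of the exponential inequality (the last bracket being $\le\sup|A|^2\int|\gamma|^2$), gives $\int_{\tau_1}^{\tau_2}|a|^2\le\varepsilon^{\theta_3}$.

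Assembling the three steps, on $B_0^c$ minus the exceptional sets one has $\int_{\tau_1}^{\tau_2}(|a|^2+|u|^2)\,ds<\varepsilon$, so the event in the statement is contained in $B_0$ together with the exceptional sets of the exponential inequalities; the former has probability $\le\tfrac12\varepsilon^{rp}$ and the latter are super-polynomially small, giving the bound $\varepsilon^{rp}$. Choosing the exponents $\nu,\mu,\theta_2,\theta_3$ so that every intermediate power is positive and the final ones exceed $1$ (so that the bounds genuinely beat the threshold $\varepsilon$), and optimizing over $\nu>r$, reproduces the classical admissible range $18r<q-8$. I expect the main obstacle to be precisely the passage described in the second paragraph: obtaining a uniform bound on $|Y|$ over a \emph{random} interval and showing that the guaranteed length $c$ controls the block scale $\ell$, which is what makes $\varepsilon_0$ depend on $c$ through $c^{\gamma_0}$. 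The exponent bookkeeping needed to land exactly on $18r<q-8$, and the verification that the stopping-time forms of the martingale inequalities are legitimate, are the secondary technical points.
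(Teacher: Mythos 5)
The paper does not actually write out a proof of this lemma: it only says the argument is "similar to the proof of Lemma 2.3.2 in [N]" (the classical Norris lemma in Nualart's book) and remarks that the lower bound $c$ on $\tau_2-\tau_1$ is crucial. Your proposal is a sound reconstruction of exactly that program, and it correctly identifies the one genuinely new ingredient: the classical steps (Chebyshev reduction to polynomially bounded coefficients, It\^o's formula for $Y^2$ plus the exponential martingale inequality to control $\int |u|^2$, then integration by parts for $A_t=\int_{\tau_1}^t a$ to control $\int a^2$) transfer verbatim to stochastic integrals cut off by stopping times, whereas the passage from $\int_{\tau_1}^{\tau_2}Y^2\,dt<\varepsilon^q$ to a uniform bound on $|Y|$ (including the endpoints $Y_{\tau_1}=\xi_2$, $Y_{\tau_2}$) needs the interval to contain blocks of a prescribed deterministic length, and this is precisely where the constraint $\ell\le c$, hence $\varepsilon_0=\delta_0c^{\gamma_0}$, arises --- matching the paper's remark. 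One caveat on the bookkeeping: with your illustrative choices (oscillation threshold $\varepsilon^{\mu/4}$ per block), the chain $\theta\to\theta_2\to\theta_3$ closes only under a condition roughly of the form $q>12+20r$, which does \emph{not} follow from $18r<q-8$; you should instead take the per-block oscillation threshold $\varepsilon^{\mu/2-\delta}$ (probability cost $2e^{-\varepsilon^{-2\delta}/2}$, still super-polynomial) and optimize $\mu\approx(q-2\nu)/2$, which gives $\theta\approx(q-2\nu)/4$ and a final constraint of roughly $14r<q-8$; this is implied by the stated hypothesis $18r<q-8$, so the lemma follows (your claim that the optimization "reproduces" $18r<q-8$ is not literally what comes out, but the range you obtain is larger, which is harmless). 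A second, minor point: your Chebyshev step with $\nu>r$ makes $\varepsilon_0$ depend also on the moment in (\ref{moment}), not only on $p,q,r,T$; this imprecision is already present in the paper's own statement (the classical lemma carries a constant $C$ in front of $\varepsilon^{rp}$), so it is not a defect of your argument relative to the paper's.
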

The proof is similar to the proof of \cite[Lemma 2.3.2]{N} and  we  omit the details. Just remark that the lower bound $c$ on the length of the interval $[t_1, t_2]$ is crucial in the proof.

 \medskip
We are going to impose a   uniform H\"{o}rmander's condition on the coefficients. To formulate this condition we need some notation.
Consider   the following sets of vector fields:
\begin{eqnarray*}
\Sigma_0=\!\!\!\!\!\!\!\!&&\{\sigma_1,\dots, \sigma_d\}\,,\\
\Sigma_n=\!\!\!\!\!\!\!\!&&\{[\sigma_k, V], k=0, \dots d, V\in\Sigma_{n-1}\},\quad  n\geq 1\,,\\
\Sigma=\!\!\!\!\!\!\!\!&&\cup^{\infty}_{n=0}\Sigma_n\,,
\end{eqnarray*}
where $\displaystyle  \sigma_0 =b-\frac{1}{2}\sum^{d}_{i=1}(\nabla \sigma_i)\sigma_i$ and  $[V,G]=(\nabla G)V -(\nabla V)G$ denotes the Lie bracket between two vector fields $V$ and $G$.

The following uniform H\"ormander's condition requires that the vector space spanned by $\{V(x,\alpha), V\in \Sigma\}$  is  $\RR^n$ for all $(x,\al)\in\RR^n\times \SS$ in a uniform way,
  where $V_j(x, \alpha)$ denotes the vector obtained by freezing the variables $x$ and $\alpha$ in the vector field $V_j$.

\medskip
\noindent \textbf{(UHC)} (Uniform H\"{o}rmander's condition)
Condition \textbf{(${\bf H}_\infty$)} holds and
 there exists an integer $j_0\geq 0$ and a constant $c>0$ such that
\begin{equation} \label{HC}
 \sum^{j_0}_{j=0}\sum_{V\in\Sigma_j}(v^{\ast}V(x, \alpha))^2\geq c,
\end{equation}
for all $x\in \RR^n$, $\alpha \in \SS$ and $ v\in \RR^n$ with $|v|=1$.

\begin{theorem}  \label{thm4.7} Assume that the uniform H\"ormander's condition  \textbf{(UHC)} holds.
Then for all  $ t>0$ the Malliavin matrix $M_t$ of the random vector $X_t$ is invertible $\mathbb{P}$-a.s.
and $\det(M^{-1}_t)\in L^p(\Omega)$ for all $p\geq 2$. As a consequence,  for any $t>0$, the law
of $X_t$ is absolutely continuous with respect to Lebesgues measure and the density
is  smooth.
\end{theorem}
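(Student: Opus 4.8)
The plan is to reduce the claim to a lower bound for the smallest eigenvalue of the reduced Malliavin matrix $C_t$, and then to obtain that bound by transporting the uniform Hörmander condition through an iterated application of the stopping-time Norris lemma on a random interval on which the switching component does not jump. Since $M_t = J_{0,t}C_t J_{0,t}^\ast$ and Lemma \ref{l.4.3} gives $\mathbb{E}[\sup_{0\le t\le T}\|J_{0,t}^{-1}\|^p]<\infty$ for every $p$, it suffices to show $\det C_t^{-1}\in\cap_p L^p$, i.e. that $\lambda_t:=\inf_{|v|=1}v^\ast C_t v$ satisfies $\mathbb{P}(\lambda_t<\varepsilon)\le C_p\varepsilon^p$ for all $p$ and small $\varepsilon$. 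By the standard covering argument on the unit sphere (\cite{N}) this follows from the uniform estimate $\sup_{|v|=1}\mathbb{P}(v^\ast C_t v<\varepsilon)\le C_p\varepsilon^p$, and writing $v^\ast C_t v = \int_0^t\sum_{i=1}^d(Z_s^{\sigma_i})^2\,ds$ with $Z_s^V:=v^\ast J_{0,s}^{-1}V(X_s,\alpha_s)$, everything reduces to showing this quadratic functional cannot be too small.

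Next I would set up the random interval. Conditioning on the Poisson measure $N$, hence on $N_t=k$ and on the jump times of $N$, the Brownian motion $W$ remains a Brownian motion (by independence of $W$ and $N$), and there is an interval $[T_1,T_2]\subset[0,t]$, measurable with respect to $N$, with $T_2-T_1\ge t/(k+1)=:c$. Since the jump times of $\alpha$ form a subset of those of $N$, the process $\alpha_s$ is constant, equal to $\alpha_{T_1}$, on $[T_1,T_2)$. Restarting the flow at $T_1$ and writing $\tilde v=(J_{0,T_1}^{-1})^\ast v$, on this interval $Z_s^V=\tilde v^\ast\hat J_{T_1,s}^{-1}V(X_s,\alpha_{T_1})$ solves the classical variational identity, free of jump terms,
\[
 dZ_s^V = Z_s^{[\sigma_0,V]}\,ds + \sum_{i=1}^d Z_s^{[\sigma_i,V]}\,dW_s^i,
\]
with the frozen vector fields $V(\cdot,\alpha_{T_1})$ and Stratonovich drift $\sigma_0$. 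This is exactly the structure to which Lemma \ref{l.4.6} applies with $\tau_1=T_1$, $\tau_2=T_2$.

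Then comes the iteration. Starting from the hypothesis $v^\ast C_t v<\varepsilon^{q_0}$, which forces $\int_{T_1}^{T_2}\sum_i(Z_s^{\sigma_i})^2\,ds<\varepsilon^{q_0}$, I would apply Lemma \ref{l.4.6} to each $Z^V$, $V\in\Sigma_j$: the smallness of $\int_{T_1}^{T_2}(Z^V)^2\,ds$ propagates, off a set of probability at most a power of $\varepsilon$, to the smallness of $\int_{T_1}^{T_2}\big((Z^{[\sigma_0,V]})^2+\sum_i(Z^{[\sigma_i,V]})^2\big)\,ds$; the moment hypothesis \eref{moment} is verified using \textbf{(${\bf H}_\infty$)}, Lemma \ref{l.4.3} and the moment bounds on $X$. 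Relabelling the exponents and iterating up to level $j_0$ yields that, outside an event of small probability, $\int_{T_1}^{T_2}\sum_{j=0}^{j_0}\sum_{V\in\Sigma_j}(Z_s^V)^2\,ds$ is smaller than any prescribed power of $\varepsilon$. On the other hand, \textbf{(UHC)} applied pointwise to the unit vector along $(\hat J_{T_1,s}^{-1})^\ast\tilde v$ gives $\sum_{j=0}^{j_0}\sum_{V\in\Sigma_j}(Z_s^V)^2\ge c\,|(\hat J_{T_1,s}^{-1})^\ast\tilde v|^2$, and on the event where $\|J\|$ and $\|J^{-1}\|$ are bounded (of high probability by Lemma \ref{l.4.3} and Chebyshev) this is bounded below by a positive constant, so that the integral over $[T_1,T_2]$ is at least $c'(T_2-T_1)\ge c't/(k+1)$. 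For $\varepsilon$ small these two facts are contradictory, which bounds $\mathbb{P}(v^\ast C_t v<\varepsilon^{q_0})$ on $\{N_t=k\}$.

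Finally I would remove the conditioning by summing over $k$, using that $N_t$ is Poisson so that $\mathbb{P}(N_t=k)$ decays faster than any power; this absorbs the degradation of the Norris threshold $\varepsilon_0=\delta_0c^{\gamma_0}=\delta_0(t/(k+1))^{\gamma_0}$ as $k\to\infty$ and yields the unconditional estimate $\sup_{|v|=1}\mathbb{P}(v^\ast C_t v<\varepsilon)\le C_p\varepsilon^p$. This gives $\det M_t^{-1}\in\cap_p L^p$; combined with $X_t\in\mathbb{D}^{\infty}$ (Remark \ref{r.4.5}, available since \textbf{(UHC)} contains \textbf{(${\bf H}_\infty$)}) and the standard criterion for smoothness of densities (\cite{N}), the absolute continuity and smoothness of the law of $X_t$ follow. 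The main obstacle I anticipate is the bookkeeping linking the fixed lower bound $c=t/(k+1)$ on the interval length to the $k$-dependent threshold in Lemma \ref{l.4.6}, and then summing the resulting conditional bounds against the Poisson weights so that the final estimate is uniform in $v$ and valid for every $p$; ensuring that the exponents produced by the $j_0$ successive Norris steps still leave room for this summation is the delicate point.
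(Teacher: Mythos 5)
Your strategy coincides with the paper's own proof: reduce to the reduced matrix $C_t$ via Lemma \ref{l.4.3}, condition on $N_t=k$ to produce a jump-free random interval $[T_1,T_2]$ with $T_2-T_1\ge t/(k+1)$, iterate the stopping-time Norris lemma (Lemma \ref{l.4.6}) through the bracket levels, and sum the $k$-dependent conditional bounds against the Poisson weights. However, one central step fails as written. Lemma \ref{l.4.6} controls the \emph{It\^o} drift and diffusion coefficients of the semimartingale $Z^V_s=v^{\ast}J_{0,s}^{-1}V(X_s,\alpha_{T_1})$; the identity you display,
\[
dZ^V_s=Z^{[\sigma_0,V]}_s\,ds+\sum_{i=1}^d Z^{[\sigma_i,V]}_s\,dW^i_s,
\]
holds only in Stratonovich form, while in It\^o form the drift is $Z_s^{[\sigma_0,V]+\frac12\sum_i[\sigma_i,[\sigma_i,V]]}$. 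Hence Norris propagates smallness of $\int_{T_1}^{T_2}(Z^V_s)^2\,ds$ to
\[
\int_{T_1}^{T_2}\Bigl(\bigl(Z_s^{[\sigma_0,V]+\frac12\sum_i[\sigma_i,[\sigma_i,V]]}\bigr)^2
+\sum_{i=1}^d\bigl(Z_s^{[\sigma_i,V]}\bigr)^2\Bigr)ds,
\]
and \emph{not} to $\int_{T_1}^{T_2}\bigl((Z_s^{[\sigma_0,V]})^2+\sum_i(Z_s^{[\sigma_i,V]})^2\bigr)ds$ as you assert; your iteration therefore cannot run over the original family $\Sigma_j$. This is precisely why the paper introduces the modified family $\Sigma'_j$, generated by $V\mapsto[\sigma_k,V]$, $k=1,\dots,d$, and $V\mapsto[\sigma_0,V]+\frac12\sum_{i=1}^d[\sigma_i,[\sigma_i,V]]$, together with the observation --- absent from your argument --- that $\Sigma(x,\alpha)$ and $\Sigma'(x,\alpha)$ span the same space, so that \textbf{(UHC)} transfers to $\Sigma'$ with some $j_0$ and $c>0$.

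A second defect is in the contradiction step: you truncate $\|J_{0,s}\|$ and $\|J^{-1}_{0,s}\|$ at a fixed constant on an event of ``high probability''. A fixed truncation level leaves a complement of fixed, $\varepsilon$-independent probability, which can never produce $\mathbb{P}(v^{\ast}C_tv<\varepsilon\mid N_t=k)\le C\varepsilon^{p}$ for \emph{every} $p$. The truncation must sharpen with $\varepsilon$: the paper takes $G=\{\sup_{T_1\le s\le T_2}\|J_{0,s}\|\le\varepsilon^{-\beta}\}$ with $2\beta<m(j_0)$, so that Chebyshev gives $\mathbb{P}(G^c\mid N_t=k)\le C\varepsilon^{p\beta}$ for every $p$, while on $G$ the \textbf{(UHC)} lower bound $tc\,\varepsilon^{2\beta}/(k+1)$ still dominates the upper bound $(j_0+1)\varepsilon^{m(j_0)}$ coming from the iteration (which, incidentally, yields the fixed power $\varepsilon^{m(j_0)}$, $m(j_0)=2^{-4j_0}$, not ``any prescribed power of $\varepsilon$''). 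With these two repairs your proof becomes exactly the paper's; in particular your handling of the $k$-dependent Norris threshold $\delta_0(t/(k+1))^{\gamma_0}$ and the factorial decay of $\mathbb{P}(N_t=k)$ is the paper's concluding computation verbatim.
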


\begin{proof}
We recall that $M_t=J_{0,t}C_t J_{0,t}^{\ast}$. By Lemma \ref{l.4.3}  it suffices to prove  that $\det(C^{-1}_t)\in L^p(\Omega)$ for all $p\ge 2$.

Recall that $\{N_t=N([0, t], m_0(m_0-1)K), t\ge 0\}$  is a Poisson process with parameter $\lambda:=m_0(m_0-1)K$. For a fixed $t>0$, conditioned on  $N_t=k$, there exists a  random interval $[T_1, T_2]$  such that $T_2-T_1\geq\frac{t}{k+1}$ and $\alpha_t=\alpha_{T_1}$ for all $ t\in[T_1, T_2)$.

We introduce the following sets of vector fields:
\begin{eqnarray*}
\Sigma'_0=\!\!\!\!\!\!\!\!&&\Sigma_0\,; \\
\Sigma'_n=\!\!\!\!\!\!\!\!&&\bigg\{[\sigma_k, V], k=1, \cdots d, V\in\Sigma'_{n-1};\\
&&\quad \quad [\sigma_0, V]+\frac{1}{2}\sum^d_{j=1}[\sigma_j, [\sigma_j, V]], V\in\Sigma'_{n-1}\bigg\},
\quad  n\geq 1\,; \\
\Sigma'=\!\!\!\!\!\!\!\!&&\bigcup^{\infty}_{n=0}\Sigma'_n\,.
\end{eqnarray*}
We denote by $\Sigma_n(x,\alpha)$ (resp. $\Sigma'_n(x,\alpha)$) the subset
of $\mathbb{R}^n$ obtained by freezing the variable $x, \alpha$ in the vector
fields of $\Sigma_n$ (resp. $\Sigma'_n$). Clearly, the vector spaces spanned
by $\Sigma(x, \alpha)$ or by $\Sigma'(x, \alpha)$ coincide.
By condition ({\bf UHC}), there exists an integer $j_0\geq 0$ and a $c>0$ such that
\begin{equation} \label{HC}
\inf_{x\in\RR^n}\inf_{\al\in\SS}\sum^{j_0}_{j=0}\sum_{V\in\Sigma'_j}(v^{\ast}V(x, \alpha))^2\geq c,
\end{equation}
for all  $|v|=1$.

For all $j=0, 1, \dots, j_0$, denote  $m(j)=2^{-4j}$ and define
$$E_j=\left\{\sum_{V\in\Sigma'_j}\int^{T_{2}}_{T_1}(v^{\ast}J^{-1}_{0, s}V(X_s, \alpha_s))^2 ds\leq \varepsilon ^{m(j)}\right\}.$$
Clearly $\{v^{\ast}C_{t} v\leq \varepsilon \}\subset E_0$. Consider the decomposition
$$E_0\subseteq (E_0 \cap E_1 ^{c})\cup(E_1\cap E_2^{c})\cup\cdots\cup(E_{j_0-1}\cap E_{j_0}^{c})\cup F,$$
where $F=E_0\cap E_1\cap\cdots\cap E_{j_0}$. Then for any unit vector $v$ we have
\begin{eqnarray*}
\mathbb{P}\{v^{\ast}C_t v\leq \varepsilon| N_t=k \}\leq\!\!\!\!\!\!\!\!&&\mathbb{P}(E_0| N_t=k)\nonumber\\
\leq\!\!\!\!\!\!\!\!&& \mathbb{P}(F| N_t=k)+\sum^{j_0-1}_{j=0}\mathbb{P}(E_j\cap E_{j+1}^{c}| N_t=k)\,.
\label{e.4.7}
\end{eqnarray*}
We are going to estimate each term in the above    sum. This will be done in two steps.\\

\noindent
\textit{Step 1}: We can write
\begin{equation}
\mathbb{P}(F| N_t=k)\leq \mathbb{P}(F\cap G| N_t=k)+\mathbb{P}(G^c| N_t=k),
\label{e.4.8}
\end{equation}
where $G:=\{\sup_{T_1\leq s\leq T_2}\|J_{0,s}\|\leq \frac{1}{\varepsilon^{\beta}}\}$, $0<2\beta<m(j_0)$.  First we claim that when  $\varepsilon $ is sufficiently small,
the intersection $F\cap G \cap \{N_t=k\}$ is empty.
In fact,  taking into account the estimate (\ref{HC}),  on $N_t=k$, we have
\begin{eqnarray}
&&\sum^{j_0}_{j=0}\sum_{V\in\Sigma'_j}\int^{T_2}_{T_1}(v^{\ast}J^{-1}_{0, s}V(X_s, \alpha_s))^2 ds\nonumber\\
=\!\!\!\!\!\!\!\!&&\sum^{j_0}_{j=0}\sum_{V\in\Sigma'_j}\int^{T_2}_{T_1}
\left(\frac{v^{\ast}J^{-1}_{0, s}V(X_s, \alpha_s)}{|v^{\ast}J^{-1}_{0, s}|}\right)^2
|v^{\ast}J^{-1}_{0, s}|^2 ds\geq\frac{tc\varepsilon ^{2\beta}}{k+1},\label{e.4.9}
\end{eqnarray}
because   $|v^{\ast}J^{-1}_{0, s}|\geq \frac{1}{\|J_{0,s}\|}\geq \varepsilon^{\beta}$, and  $T_2-T_1 \ge \frac t{k+1}$.
On the other hand, the left-hand side of  \eref{e.4.9}
is bounded by $(j_0+1)\varepsilon ^{m(j_0)}$ on the set $F$.  Thus     $F\cap G \cap \{N_t=k\}=\emptyset$, provided $\varepsilon < \varepsilon_1$, where $\varepsilon_1=   [\frac {tc}{(k+1)(j_0+1)} ]^{ \frac 1{ m(j_0) -2\beta}}$.

Now we consider the second term in \eref{e.4.8}. Using Chebyshev  inequality we obtain
\[
\mathbb{P} \left( \sup_{T_1\le s\le T_2} | J_{0,s}| \ge \varepsilon ^{-\beta}\Big| N_t=k \right)
\le \varepsilon^{p\beta} \mathbb{E}  \left (\sup_{T_1\le s\le T_2} | J_{0,s}|  ^p \Big| N_t=k \right).
\]
Taking into account that the Poisson random measure $N$ is independent of the Brownian motion $W$,
we can estimate the above  conditional expectation using Burkholder-Davis-Gundy's inequality as in
Lemma  \ref{l.4.3}, and we obtain the estimate
\begin{equation} \label{j2}
\mathbb{P} \left( \sup_{T_1\le s\le T_2} | J_{0,s}| \ge \varepsilon ^{-\beta} \Big| N_t=k \right)
\le C  \varepsilon^{p\beta}.
\end{equation}

\medskip
\noindent
\textit{Step 2}: We shall bound the remaining parts. For any $j=0,\dots, j_0-1$ we have
\begin{eqnarray*}
&&\mathbb{P}(E_j\cap E_{j+1}^{c}| N_t=k)\\
=\!\!\!\!\!\!\!\!&&\mathbb{P}\left\{\sum_{V\in\Sigma'_{j}}\int^{T_2}_{T_1}(v^{\ast}J^{-1}_{0,s}V(X_s, \alpha_s))^2ds\leq \varepsilon ^{m(j)}\right.,\\
&&\quad \quad \left.\sum_{V\in\Sigma'_{j+1}}\int^{T_2}_{T_1}(v^{\ast}J^{-1}_{0,s}V(X_s, \alpha_s))^2ds>\varepsilon ^{m(j+1)}\Big| N_t=k\right\}\\
\leq\!\!\!\!\!\!\!\!&&\sum_{V\in\Sigma'_{j}}
\mathbb{P}\left\{\int^{T_{2}}_{T_{1}}(v^{\ast}J^{-1}_{0,s}V(X_s, \alpha_{T_1}))^2ds\leq \varepsilon ^{m(j)}\right.,\\
&&\quad \quad \quad \quad \sum^{d}_{i=1}\int^{T_2}_{T_1}(v^{\ast}J^{-1}_{0,s}[\sigma_i, V](X_s, \alpha_{T_1}))^2ds
+\int^{T_{2}}_{T_{1}}\left(v^{\ast}\right.J^{-1}_{0,s}\left([\sigma_0, V]\right.\\
&&\quad \quad \quad \quad +\frac{1}{2}\sum^{d}_{i=1}\left.[\sigma_i, [\sigma_i, V]])(X_s, \alpha_{T_1})\right)^2ds>
\left.\frac{\varepsilon ^{m(j+1)}}{n(j)}\Big|N_t=k\right\}\,,
\end{eqnarray*}
where $n(j)$ denotes the cardinality of the set $\Sigma'_j$.  Consider the continuous semimartingale
$\{v^{\ast}J^{-1}_{0,t}V(X_t, \alpha_{T_1}), T_1\leq t< T_2 \}$.
For any $t\in [T_1, T_2)$ It\^{o}'s formula yields
\begin{eqnarray*}
&&v^{\ast}J^{-1}_{0,t}V(X_t, \alpha_{T_1})\\
=\!\!\!\!\!\!\!\!&&v^{\ast}J^{-1}_{0,T_1}V(X_{T_1}, \alpha_{T_1})+\int^{t}_{T_1} v^{\ast}J^{-1}_{0,s}
\sum^{d}_{i=1}[\sigma_i, V](X_s, \alpha_{T_1})dW^i_s\\
\!\!\!\!\!\!\!\!&&+\int^t_{T_1} v^{\ast}J^{-1}_{0,s}\left\{[\sigma_0, V]+\frac{1}{2}\sum^d_{i=1}[\sigma_i, [\sigma_i, V]]\right\}(X_s, \alpha_{T_1})ds.
\end{eqnarray*}
Notice that $8m(j+1)<m(j)$ and also notice condition (\ref{moment}) holds for any $p$ and the fact that the Poisson random measure $N$ is independent of $W$.
An application of Lemma \ref{l.4.6} to the semimartingale $Y_t=v^{\ast}J^{-1}_{0,t}V(X_t, \alpha_{T_k})$ with time interval $[T_1, T_2]$ which satisfy $T_2-T_1\geq\frac{t}{k+1}$ on the set $N_t=k$ yields
\begin{equation}  \label{j1}
\mathbb{P}  (E_j\cap E_{j+1}^c | N_t=k) \le \varepsilon^p
\end{equation}
for any $p\ge 2 $, and for $ \varepsilon<   \varepsilon_0 $, where $\varepsilon_0= \delta_0(\frac{t}{k+1})^{\gamma_0} $.
The exponents $\delta_0$ and $\gamma_0$ only depend on $p$ and $T$. Therefore, from (\ref{j2}) and (\ref{j1}) we obtain
 \[
 \mathbb{P}\{ v^{\ast}C_t v\leq \varepsilon| N_t=k\}\leq   \varepsilon^{p},
 \]
for any $p\ge 2 $, and for $ \varepsilon<\min(\varepsilon_0, \varepsilon_1) $.
Then, following the steps of \cite[Lemma 2.3.1]{N}, we can obtain that
$$
\mathbb{P}\left\{\inf_{|v|=1}v^{\ast}C_t v\leq \varepsilon\Big| N_t=k\right\}\leq   \varepsilon^{p},
$$
for all $0<\varepsilon\leq C_1 (\frac{t}{k+1})^{C_2} $ and for all $p\geq 2$, where $C_1$ and $C_2$ are positive constants depending on $p$, $T$ and $n$. Consequently,
\begin{eqnarray*}
\EE  |\det(C_t)|^{-p} \le\!\!\!\!\!\!\!\!&& \EE  (\inf_{|v|=1}v^{\ast}C_t v)^{-np} \\
\le\!\!\!\!\!\!\!\!&& \sum^{\infty}_{k=0}\mathbb{P}(N_t=k)\EE\left[ \left(\inf_{|v|=1}v^{\ast}C_t v\right)^{-np}\Big|N_t=k\right]\\
\le\!\!\!\!\!\!\!\!&&\sum^{\infty}_{k=0}\frac{\lambda^k}{k!}e^{\lambda}\left[  C_1 \left( \frac t{k+1} \right)^{C_2}
+  \frac{1}{C_1} \left( \frac {k+1}{t} \right)^{C_2} \right] <\infty.
\end{eqnarray*}
The proof is now complete.
\end{proof}

\section{Bismut  type  formula}

In this section, we  prove a version of Bismut  type  formula for SDEs with Markovian switching. As an application, this  formula is  used
to obtain the strong Feller property for the transition semigroup of $(X_t, \alpha_t)$.

\begin{theorem}
Suppose the condition \textbf{(UHC)} holds.
Then for any $f\in C^{2}_b(\mathbb{R}^n\times\mathbb{S})$, we have
\begin{equation} \label{Bi}
 \nabla  P_tf(x, \alpha)
=\mathbb{E}\left[f(X_t, \alpha_t)\int^t_0\sigma(X_s, \alpha_s)^{\ast}J_{s,t}^{\ast}M^{-1}_t J_{0,t}  d W_s\right],
\end{equation}
where $M_t=\int^t_0 J_{s,t}\sigma(X_s, \alpha_s)\sigma(X_s, \alpha_s)^{\ast}J_{s,t}^{\ast}ds$ and the stochastic integral is interpreted in the Skorohod sense, that is,  $\int^t_0\sigma(X_s, \alpha_s)^{\ast}J_{s,t}^{\ast}M^{-1}_t J_{0,t}  d W_s $ is the divergence of the process  $ \{\sigma(X_s, \alpha_s)^{\ast}J_{s,t}^{\ast}M^{-1}_t J_{0,t}   I_{[0,t]}(s), s\ge 0\}$.
\end{theorem}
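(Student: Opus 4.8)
The plan is to combine the chain rule of Section 3 with the integration-by-parts (duality) formula of the partial Malliavin calculus, exactly as in the classical derivation of the Bismut formula, the novelty being that all the switching structure has already been absorbed into the flow $J_{0,t}$. The first step is to reduce the gradient of the semigroup to a Malliavin-differentiable functional. Since $f\in C^2_b(\mathbb{R}^n\times\mathbb{S})$ and $P_tf(x,\alpha)=\mathbb{E}[f(X_t,\alpha_t)]$, I would differentiate in $x$ under the expectation and invoke the $x$-gradient analogue of the Chain rule theorem of Section 3: the key point, as stressed in the remark following Theorem \ref{main}, is that the perturbation of the starting point affects the switching component $\alpha_t$ only through a term whose contribution to the mean-square derivative vanishes (this is the exact role played by the pure-$\alpha$ difference $f(X^{\varepsilon h}_t,\alpha^{\varepsilon h}_t)-f(X^{\varepsilon h}_t,\alpha_t)$ in the proof of the Chain rule theorem). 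Hence no jump term appears and one obtains $\nabla P_tf(x,\alpha)=\mathbb{E}\big[\nabla f(X_t,\alpha_t)\,J_{0,t}\big]$, where $J_{0,t}=\nabla X_t$ is the mean-square gradient of the flow from the flow theorem of Section 4.

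Next I would rewrite $\nabla f(X_t,\alpha_t)J_{0,t}$ as a Malliavin inner product. Using the chain rule in Malliavin sense, $D_s\big(f(X_t,\alpha_t)\big)=\nabla f(X_t,\alpha_t)\,D_sX_t$ with the representation $D_sX_t=J_{s,t}\sigma(X_s,\alpha_s)$, and recalling $M_t=\int_0^t(D_sX_t)(D_sX_t)^{\ast}\,ds$. Define the (matrix-valued) process
\[
u_s=\sigma(X_s,\alpha_s)^{\ast}J_{s,t}^{\ast}M_t^{-1}J_{0,t}\,I_{[0,t]}(s)=(D_sX_t)^{\ast}M_t^{-1}J_{0,t}\,I_{[0,t]}(s).
\]
A direct computation, contracting over $H$ and over the $d$ Brownian directions column by column of $J_{0,t}$, gives
\[
\langle D(f(X_t,\alpha_t)),u\rangle_H=\nabla f(X_t,\alpha_t)\Big(\int_0^t(D_sX_t)(D_sX_t)^{\ast}\,ds\Big)M_t^{-1}J_{0,t}=\nabla f(X_t,\alpha_t)\,M_tM_t^{-1}J_{0,t}=\nabla f(X_t,\alpha_t)\,J_{0,t}.
\]

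I would then apply the duality relationship of the preliminaries, componentwise in the $\mathbb{R}^n$ index of $u$, with $U=V=L^2(\Omega_2)$, to conclude
\[
\nabla P_tf(x,\alpha)=\mathbb{E}\big[\langle D(f(X_t,\alpha_t)),u\rangle_H\big]=\mathbb{E}\big[f(X_t,\alpha_t)\,\delta(u)\big],
\]
where $\delta(u)=\int_0^t\sigma(X_s,\alpha_s)^{\ast}J_{s,t}^{\ast}M_t^{-1}J_{0,t}\,dW_s$ is precisely the Skorohod integral in the statement. Taking first $\mathbb{E}_1$ and then $\mathbb{E}_2$ reassembles the full expectation $\mathbb{E}$.

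The main obstacle, and the only genuinely technical point, is to verify that $u$ belongs to the domain of the divergence operator $\delta$, so that the duality formula is legitimately applied and $\delta(u)$ is a well-defined element of $L^2(\Omega_1;V)$. For this I would show $u\in\mathbb{D}^{1,2}(H\otimes\mathbb{R}^n)$ by checking that each factor has the required integrability: Lemma \ref{l.4.3} controls the $L^p$ norms of $J_{s,t}$ and $J_{s,t}^{-1}$ for all $p$; Theorem \ref{thm4.7} guarantees that $\det(M_t^{-1})$, hence $M_t^{-1}$, has moments of all orders; and the Malliavin differentiability of $M_t^{-1}$ follows from $D(M_t^{-1})=-M_t^{-1}(DM_t)M_t^{-1}$ together with the differentiability of $J_{s,t}$ and $X_t$ supplied by Lemma \ref{l.4.4} and Remark \ref{r.4.5} (valid since \textbf{(UHC)} contains \textbf{(${\bf H}_\infty$)}). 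Combining these estimates via H\"older's inequality yields the finiteness of $\mathbb{E}\|u\|_H^2+\mathbb{E}\|Du\|_{H\otimes H}^2$, which places $u$ in $\dom\delta$ and completes the justification; the interchange of limit and expectation in the first step is then secured by the same $L^p$ bounds and the boundedness of $\nabla f$.
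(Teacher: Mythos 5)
Your proposal is correct and follows essentially the same route as the paper: both reduce $\nabla P_tf$ to $\mathbb{E}[\nabla f(X_t,\alpha_t)J_{0,t}]$ via the flow gradient, introduce the covering process $(DX_t)^{\ast}M_t^{-1}J_{0,t}$ (the paper contracts with a fixed unit vector $\xi$, you keep it matrix-valued, a purely cosmetic difference), use $\langle DX_t,(DX_t)^{\ast}\rangle_H=M_t$ to recover $J_{0,t}$, and invoke the partial-Malliavin duality with $V=L^2(\Omega_2)$, justified by the same ingredients (Lemma \ref{l.4.3}, Lemma \ref{l.4.4}, Theorem \ref{thm4.7}, and $D(M_t^{-1})=-M_t^{-1}(DM_t)M_t^{-1}$).
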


\begin{proof}
For any $\xi\in\mathbb{R}^{n}$ with  $|\xi|=1$, let $h^{\xi}=(DX_t)^{\ast} M^{-1}_t J_{0,t}\xi$. Then we get
\begin{eqnarray*}
\langle DX_t, h^{\xi}\rangle_{H}
=\!\!\!\!\!\!\!\!&& \langle DX_t, (DX_t)^{\ast}M^{-1}  J_{0,t}\xi\rangle_{H}\\
=\!\!\!\!\!\!\!\!&&\langle DX_t, (DX_t)^{\ast} \rangle_{H} M^{-1}  J_{0,t}\xi
=J_{0,t}\xi.\nonumber
\end{eqnarray*}
We claim that $h^{\xi}\in \mathbb{D}^{1, p}(H\otimes V)$ for any $p\geq 2$.
In fact, we have
\begin{eqnarray*}
D^{i}_{s}h^{\xi}=\!\!\!\!\!\!\!\!&&(D^{i}_{s}(DX_t)^{\ast})M^{-1}_t
J_{0,t}\xi+(DX_t)^{\ast}M^{-1}_t(D^{i}_{s}J_{0,t})\xi+(DX_t)^{\ast}(D^{i}_{s}M^{-1}_t) J_{0,t}\xi\nonumber\\
=\!\!\!\!\!\!\!\!&&(D^{i}_{s}(DX_t)^{\ast})M^{-1}_t J_{0,t}\xi
+(DX_t)^{\ast}M^{-1}_t(D^{i}_{s}J_{0,t})\xi\nonumber\\
&&-(DX_t)^{\ast}M^{-1}_t\left[ \langle D^{i}_{s}(DX_t) ,  (DX_t)^{\ast} \rangle_H + \langle DX_t, D^{i}_{s}(DX_t)^{\ast} \rangle_H \right]M^{-1}_t J_{0,t}\xi.
\end{eqnarray*}
 By Lemma \ref{l.4.3}, Lemma \ref{l.4.4} and Theorem  \ref{thm4.7}    we obtain
\[
\mathbb{E}_{1}\|h^{\xi}\|^{p}_{H\otimes V}+\mathbb{E}_{1}\|Dh^{\xi}\|^{p}_{H\otimes H\otimes V}
\leq\mathbb{E}\|h^{\xi}\|^{p}_{H}+\sum^{d}_{i=1}\mathbb{E}\int^{t}_{0}\|D^{i}_{s}h^{\xi}\|^{p}_{H}ds<\infty.
\]
Notice that
 $h^{\xi}_s=\sigma(X_s, \alpha_s)^{\ast}J_{s,t}^{\ast}M^{-1}_t J_{0,t}\xi$.
Then, the derivative of  $P_tf(x, \alpha)$ can be computed as follows
\begin{eqnarray}
\langle \nabla P_tf(x,\alpha), \xi\rangle=\!\!\!\!\!\!\!\!&&\mathbb{E}(\nabla^{\xi}[f(X_t, \alpha_t)])
=\mathbb{E}\left[\nabla f(X_t, \alpha_t)J_{0,t}\xi\right]\nonumber\\
=\!\!\!\!\!\!\!\!&&\mathbb{E}\left[\nabla f(X_t, \alpha_t)\langle DX_t, h^{\xi}\rangle_{H}\right]
=\mathbb{E}\left[\langle Df(X_t, \alpha_t), h^{\xi}\rangle_H\right]\nonumber\\
=\!\!\!\!\!\!\!\!&&\mathbb{E}_{1}\left[\langle Df(X_t, \alpha_t), h^{\xi}\rangle_{H\otimes V}\right]
=\mathbb{E}_{1}\left[\langle f(X_t, \alpha_t),\delta(h^{\xi})\rangle_{V}\right]\nonumber\\
=\!\!\!\!\!\!\!\!&&\mathbb{E}\left[f(X_t, \alpha_t)\delta(h^{\xi})\right]\nonumber\\
=\!\!\!\!\!\!\!\!&&\mathbb{E}\left[f(X_t, \alpha_t)\int^t_0\sigma(X_s, \alpha_s)^{\ast}J_{s,t}^{\ast}M^{-1}_t J_{0,t}\xi d W_s\right],\nonumber
\end{eqnarray}
where the second and forth equalities  follow from the chain rule and
the sixth equality follows from the integration by parts formula, where
the stochastic integral  is interpreted  in the Skorohod  sense.
\end{proof}

\vspace{0.3cm}

As an application of the above  Bismut  type  formula, we intend to prove the strong Feller property. That is, we claim that  for any $t>0$ and for
any bounded Borel measurable function $f$ on ${\mathbb{R}^n}\times\mathbb{S}$, $P_tf(x, \alpha)$
is bounded and continuous in   $(x, \alpha)$.
Since  $\mathbb{S}$ is a finite set,
 it is sufficient  to prove that  for any $\alpha\in\mathbb{S}$\,,
$P_tf(x, \alpha)$ is bounded and continuous with respect to $x$.
%We shall prove that this is a straightforward consequence of the Bismut formula.
\begin{theorem} Suppose that  condition \textbf{(UHC)} holds.
Then for any $f\in{\mathcal B}_{b}({\mathbb{R}^n}\times\mathbb{S})$,
$t>0$, $\alpha\in\mathbb{S}$, $x\in \mathbb{R}^n$, we have
\begin{equation}
\lim_{y\rightarrow x}|P_tf(y, \alpha)-P_tf(x, \alpha)|=0.\nonumber
\end{equation}
\end{theorem}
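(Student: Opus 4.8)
The plan is to upgrade the Bismut formula of Section 5 into a total variation estimate for the transition kernels $P_t((x,\alpha),\cdot)$, which then automatically covers arbitrary bounded Borel test functions. Since $\mathbb{S}$ is finite, continuity with respect to the metric $\Lambda$ reduces to continuity in $x$ for each fixed $\alpha\in\mathbb{S}$, so it suffices to control the oscillation of $P_tf(\cdot,\alpha)$ in $x$. Throughout, the guiding observation is that the Bismut formula is stated for $f\in C^2_b$, whereas the theorem concerns $f\in{\mathcal B}_b(\mathbb{R}^n\times\mathbb{S})$, and the whole game is to bridge this gap.

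First I would extract a gradient bound. For $f\in C^2_b(\mathbb{R}^n\times\mathbb{S})$ and a unit vector $\xi$, the formula of Section 5 reads $\langle\nabla P_tf(x,\alpha),\xi\rangle=\mathbb{E}\left[f(X_t,\alpha_t)\,\delta(h^{\xi})\right]$ with $h^{\xi}_s=\sigma(X_s,\alpha_s)^{\ast}J_{s,t}^{\ast}M^{-1}_t J_{0,t}\xi$, whence
\[
|\nabla P_tf(x,\alpha)|\le \|f\|_\infty\,\sup_{|\xi|=1}\left(\mathbb{E}|\delta(h^{\xi})|^2\right)^{1/2}=:\|f\|_\infty\,\Psi(t,x,\alpha).
\]
The crucial feature is that only $\|f\|_\infty$ appears on the right. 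To see that $\Psi(t,x,\alpha)<\infty$ and is bounded on compact sets in $x$, I would invoke the standard bound $\mathbb{E}|\delta(h^{\xi})|^2\le\mathbb{E}\|h^{\xi}\|^2_H+\mathbb{E}\|Dh^{\xi}\|^2_{H\otimes H}$ for the divergence, and then combine the moment estimates already available: Lemma \ref{l.4.3} for $J_{s,t}$ and $J^{-1}_{0,s}$, Lemma \ref{l.4.4} for $DJ_{s,t}$ and $D^2X_t$, Theorem \ref{thm4.7} for the negative moments of $M_t$, and the a priori bound $\mathbb{E}\sup_{s\le T}|X_s|^p\le C$ controlling the linear growth of $\sigma$. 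Repeated use of H\"older's inequality on these factors yields $\Psi(t,x,\alpha)\le C(t)(1+|x|)^{m}$ for some $m$, which is in particular finite and locally bounded in $x$.

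Integrating this gradient bound along the segment joining two points gives, for every compact $K\subset\mathbb{R}^n$, every $f\in C^2_b$ and all $x,y\in K$,
\[
|P_tf(x,\alpha)-P_tf(y,\alpha)|\le L_K(t,\alpha)\,\|f\|_\infty\,|x-y|,\qquad L_K(t,\alpha):=\sup_{z\in K}\Psi(t,z,\alpha)<\infty.
\]
I would then convert this smooth-test-function estimate into a statement about the transition laws. Because $C_c^\infty(\mathbb{R}^n)\subset C^2_b$ and, on each fiber $\mathbb{R}^n\times\{j\}$, functions in $C_c^\infty$ with sup norm at most one compute the total variation of a finite signed measure (Riesz representation together with the density of $C_c^\infty$ in $C_0$), the preceding inequality upgrades to
\[
\|P_t((x,\alpha),\cdot)-P_t((y,\alpha),\cdot)\|_{\mathrm{TV}}\le 2\,L_K(t,\alpha)\,|x-y|,\qquad x,y\in K.
\]

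Finally, for an arbitrary $f\in{\mathcal B}_b(\mathbb{R}^n\times\mathbb{S})$ and $x,y$ in a fixed compact neighbourhood of $x$, one has $|P_tf(y,\alpha)-P_tf(x,\alpha)|\le\|f\|_\infty\,\|P_t((y,\alpha),\cdot)-P_t((x,\alpha),\cdot)\|_{\mathrm{TV}}\le 2L_K(t,\alpha)\|f\|_\infty|y-x|$, which tends to $0$ as $y\to x$; together with the finiteness of $\mathbb{S}$ this is the asserted strong Feller property. The step I expect to be the main obstacle is precisely the passage from $C^2_b$ to all bounded Borel $f$: the gradient estimate only makes sense for smooth $f$, and it is the reformulation as a total variation bound — which in turn rests on the locally uniform-in-$x$ $L^2$-integrability of the Malliavin weight $\delta(h^{\xi})$, where all the earlier moment estimates must be marshalled simultaneously — that removes any regularity requirement on $f$.
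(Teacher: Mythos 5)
Your proposal is correct and follows essentially the same route as the paper: the Bismut formula yields a gradient bound of the form $\|\nabla P_t f(\cdot,\alpha)\|\le C\|f\|_\infty$ (locally uniform in $x$, via the $L^2$ estimate for the Skorohod integral and the moment bounds of Lemmas \ref{l.4.3}, \ref{l.4.4} and Theorem \ref{thm4.7}), hence a local Lipschitz estimate for $P_tf$ with constant proportional to $\|f\|_\infty$, which then extends to all bounded Borel $f$. The only difference is that you spell out the final passage from $C^2_b$ to ${\mathcal B}_b$ as a total variation bound via duality with $C_c^\infty$ on each fiber, which is precisely the ``standard argument'' the paper invokes without detail.
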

\begin{proof}
Fix $x\in\mathbb{R}^n$, $ \alpha\in\mathbb{S}$, $ t>0$.  First, we
 consider the case  where   $f\in C^{2}_b(\mathbb{R}^n\times\mathbb{S})$.
Applying  (\ref{Bi}), we have
\begin{eqnarray*}
|\nabla P_tf(x, \alpha)|=\!\!\!\!\!\!\!\!&&\sup_{|\xi|=1}|\langle \nabla P_tf(x,\alpha), \xi\rangle|\nonumber\\
\leq\!\!\!\!\!\!\!\!&&\sup_{|\xi|=1}\mathbb{E}\left|f(X_t, \alpha_t)\int^t_0h^{\xi}_s d W_s\right|\nonumber\\
\leq\!\!\!\!\!\!\!\!&&\|f\|_{\infty}\sup_{|\xi|=1}\mathbb{E}_1\left\|\int^t_0h^{\xi}_s d W_s\right\|_{V}.\nonumber\\
\end{eqnarray*}
where $h^{\xi}_s=\sigma(X_s, \alpha_s)^{\ast}J_{s,t}^{\ast}M^{-1}_t J_{0,t}\xi$. As we have seen, since the process $h^{\xi}_s$ is not adapted, the integral    is  Skorohod integral   and it can be estimated as follows:
\begin{eqnarray*}
\mathbb{E}_1\left\|\int^t_0 h^{\xi}_s d W_s\right\|_{V}
\leq\!\!\!\!\!\!\!\!&&  \left(\mathbb{E}_1\left\|\int^t_0 h^{\xi}_s d W_s\right\|^{2}_{V}\right)^{1/2}\\
=\!\!\!\!\!\!\!\!&& \mathbb{E}_1\int^t_0 \|h^{\xi}_s\|^{2}_{\mathbb{R}^d \otimes V}d s+
\mathbb{E}_1\int^{t}_{0}\int^{t}_{0}\langle D_{r}h^{\xi}_s,D_{s}h^{\xi}_r\rangle _{  \mathbb{R}^d \otimes \mathbb{R}^d \otimes V } drds\\
\leq\!\!\!\!\!\!\!\!&&\mathbb{E}_1\int^t_0 \|h^{\xi}_s\|^{2}_{\RR^{d}\times V}d s+
\mathbb{E}_1\int^{t}_{0}\int^{t}_{0}\|D_{r}h^{\xi}_s\|^{2}_{\mathbb{R}^{d}\otimes\mathbb{R}^{d}\otimes V}drds\\
=\!\!\!\!\!\!\!\!&&\mathbb{E}\|h^{\xi}\|^{2}_{H}+\sum^{d}_{i=1}\mathbb{E}\int^{t}_{0}\|D^{i}_{s}h^{\xi}\|^{2}_{H}ds.
\end{eqnarray*}
Thus, we have
$$
\|\nabla P_tf(x, \alpha)\|\leq C_{x}\|f\|_{\infty},
$$
where the constant $C_x$ depends on the initial condition $x$.
In fact, we also have that for any   $y\in B_r(x) =\{y\in\mathbb{R}^n: |y-x|\leq r\}$,  the following
inequality holds
\begin{eqnarray*}
\sup_{y\in B_r(x)}|\nabla P_tf(y, \alpha)|\leq C_{x, r}\|f\|_{\infty}.
\end{eqnarray*}
This implies easily for any $|y-x|\leq 1$,
\begin{eqnarray}
|P_t f(y, \alpha)-P_t f(x, \alpha)|\le\!\!\!\!\!\!\!\!&&C_{x, 1}\|f\|_{\infty}|y-x|.\nonumber
\end{eqnarray}
Hence, the theorem holds for any $f\in{\mathcal B}_b({\mathbb{R}^n}\times\mathbb{S})$ by a standard argument.
\end{proof}

\def\refname{References}

\end{document}